\newtheorem{Th}{Theorem}
\newtheorem{Lem}{Lemma}
\newtheorem{Def}{Definition}
\newtheorem{Prop}{Proposition}
\newtheorem{Cor}{Corollary}
\title{Some results about ergodicity in shape for a crystal growth model}
\author{F. Ezanno\footnote{\noindent CMI, 
Universit\'e de Provence, 39, rue F. Joliot Curie, 13453 Marseille Cedex 13, FRANCE, \textit{fezanno@cmi.univ-mrs.fr} \newline \textit{AMS} 2010 \textit{subject classification}. Primary, 60J27; secondary 60G55, 60K25, 60J10. \newline \textit{Keywords and phrases.} Markov chain, random deposition, positive recurrence.}}
\date{}
\newcommand{\RR}{\mathbb{R}}
\newcommand{\ZZ}{\mathbb{Z}}
\newcommand{\EE}{\mathbb{E}}
\newcommand{\PP}{\mathbb{P}}
\newcommand{\NN}{\mathbb{N}}
\renewcommand{\H}{\mathcal{H}}
\newcommand{\ind}{\mathds{1}}
\newcommand{\A}{\mathcal E}
\newcommand{\D}{\mathcal D}
\newcommand{\as}{a.\@s.\@}
\newcommand{\ze}{\mathbf{0}}
\newcommand{\ud}{\mathrm{d}}
\newcommand{\eps}{\varepsilon}
\newcommand{\X}{\tilde X}
\renewcommand{\d}{\tilde d}
\newcommand{\zein}{0\slash\infty}
\newcommand{\oexp}[1]{\mathcal{O}_{exp}(#1)}
\newcommand{\oexpt}[2]{\mathcal{O}_{exp}^{#1}(#2)}
\newcommand{\tl}{\frac{t-\ell}{L}}
\newcommand{\PPP}[1]{\PP_0\left(#1\right)}
\begin{document}
\maketitle
\begin{abstract}
We study a crystal growth Markov model proposed by Gates and Westcott (\cite{Kinetics1}, \cite{Kinetics2}). This is an aggregation process where particles are packed in a square lattice accordingly to prescribed deposition rates. This model is parametrized by three values $(\beta_i,~i=0,1,2)$ corresponding to depositions on three different types of sites. The main problem is to determine, for the shape of the crystal, when recurrence and when ergodicity do occur. In \cite{AMS} and \cite{MarkovModels} sufficient conditions are given both for ergodicity and transience. We establish some improved conditions and give a precise description of the asymptotic behavior in a special case. 

\end{abstract}

\section{Definitions and first properties}  

Let $n$ be an integer, $n\geq 2$. We consider a set of $n$ aligned sites, each
site corresponding to a growing pile of particles. The state of a lamellar
crystal (see \cite{Sadler}) is described by a vector $x=(x(1),\dots,
x(n))\in\NN^n$, where the value of $x(i)$ may be thought of as the height of the
pile above site $i$. If $1\leq j\leq n$, $e_j$ will stand for the unitary
vector: $$e_j(i)=\delta_{i,j}.$$ 
For $x\in \NN^n$ and $1\leq j\leq n$, let $V_j(x)$ be the number of sites
adjacent to $j$ whose pile is strictly higher than the pile at site $j$. Namely,
$$V_j(x)=\ind_{\{ x(j-1)>x(j) \}}+\ind_{\{ x(j+1)>x(j) \}}\in\{0,1,2\}.$$ 
For $V_j(x)$ to be well-defined for $j=1$ and $j=n$, we adopt from now on the
convention that $x(0)=x(n+1)=0$, unless otherwise specified. This is the
so-called \emph{zero condition}, which amounts to add a leftmost and a rightmost
site that stay at height $0$ forever. Another natural convention is the
\emph{periodic condition} that consists in deciding that $x(0)=x(n)$ and
$x(n+1)=x(1)$, but we believe that all the results here can be transposed to
periodic condition (in the same way as Theorem 1.1 in \cite{AMS}). We shall also
use the \emph{infinite condition} (resp.\@ the \emph{zero-infinite condition}),
that is $x(0)=x(n+1)=\infty$ (resp.\@ $x(0)=0,x(n+1)=\infty$), and anything
relative to this condition will be denoted with the superscript $\infty$
(resp.\@ the superscript $\zein$).
\begin{Def} Let $n\geq 2$ and $\beta=(\beta_0,\beta_1,\beta_2)
\in]0,+\infty[^3$. We say that $(X^n_t, t\geq 0)$ is a \emph{crystal process}
with $n$ sites and parameter $\beta$ if it is a Markov process on $\NN^n$ with
transition rates given by 
$$\begin{cases}q(x,x+e_j)=\beta_{V_j(x)}, & j=1,\dots,n,\\ q(x,y)=0, & \text{if
} y\notin\{x+e_1,\dots,x+e_n\}.\end{cases}$$
\end{Def}
For a configuration $x$, we define the shape $h$ of $x$ by $$h=\left(\Delta_1
x,\dots,\Delta_{n-1}x\right),$$ where $$\Delta_j x=x(j)-x(j+1),~j=1,\dots,
n-1.$$ Knowing $h$ is equivalent to knowing $x$ up to vertical translation. It
is important to remark that $V_j(x)$ only depends on $x$ through $h$, and
$V_j(h)$ will denote the value of $V_j(x)$ for any $x$ whose shape is $h$. Let
us define, for $j=1,\dots,n$, the vector $$f_j=\begin{cases} e_1,&\text{ if
}j=1,\\ e_j-e_{j-1},&\text{ if }1<j<n, \\ -e_{n-1},&\text{ if
}j=n.\end{cases}$$ 
The object of main interest is the process of the shape of $X^n$, that we now
define, rather than the process $X^n$ itself.

\begin{Def} The \emph{shape process} with $n$ sites and parameter $\beta$ is
defined by $$H^n_t=\left(\Delta_1 X^n_t,\dots,\Delta_{n-1}X^n_t\right),$$ where
$X^n$ is a crystal process with $n$ sites and parameter $\beta$.
$H^n$ is a Markov process on $\ZZ^{n-1}$ with transition mechanism given by 
$$\begin{cases}q(h,h+f_j)=\beta_{V_j(h)}, & j=1,\dots,n,\\ q(h,h')=0, & \text{if } h'\notin\{h+f_1,\dots,h+f_n\}.\end{cases}$$
\end{Def}

These processes have a basic symmetry property, namely the process
$(X^n_t(n),\dots,$ $X^n_t(1))$ has the same distribution as $X^n$, and
consequently the process $(-\Delta_{n-1}X^n,\dots$\\$-\Delta_1 X^n)$ has the
same distribution as $H^n$.
There is a convenient construction of $X^n$, and hence of $H^n$, that we now
describe and will later refer as the \emph{Poisson construction}. As we will see
later, the interest of this construction is to yield useful couplings. Let
$b_0$, $b_1$ and $b_2$ be the $\beta_k$'s ranked in the increasing order. We
take a family of Poisson processes $(N_{k,j},~0\leq k\leq 2,~1\leq j\leq n)$
such that 

\begin{itemize}
\item[-]$N_{k,j}$ has intensity $b_k$,
\item[-] the triples $(N_{0,j},N_{1,j},N_{2,j})_{1\leq j\leq n}$ are mutually independent,
\item[-]for any $j$ there exist three processes $\tilde N_{0,j},~\tilde N_{1,j}$ and $\tilde N_{2,j}$, mutually independent, with intensities $b_0,~b_1-b_0$ and $b_2-b_1$ respectively, such that 
\begin{equation*}
 N_{0,j}=\tilde N_{0,j},
 \quad N_{1,j}=\tilde N_{0,j}+\tilde N_{1,j},
 \quad N_{2,j}=\tilde N_{0,j}+\tilde N_{1,j}+\tilde N_{2,j}.\end{equation*}
\end{itemize}

We build the process $(X^n_t,t\geq 0)$ starting from $x_0$ letting $X^n_0=x_0$,
and at any jump time $t$ of some $N_{k,j}$, 
\begin{equation}X^n_t=\begin{cases}X^n_{t^-}+e_j, &\text{ if
}\beta_{V_j(X^n_{t^-})}\geq b_k, \\ X^n_{t^-}, 
&\text{otherwise.}\end{cases}\label{poisson}
\end{equation}
It is not hard to check that this process has the Markov property and the
desired jump rates. Hence it is a crystal process starting from $x_0$ with $n$
sites and parameter $\beta$.
\\For any positive function $f$ on $\NN$ or $\RR_+$, we write $$f(x)=\oexp{x}$$
if there exists $\alpha,C>0$ such that $f(x)\leq Ce^{-\alpha x}$. If $f$ also
depends on some other variable $t$, the notation $$f(x,t)=\oexpt{t}{x}$$ means
that the same inequality holds with constants $C$ and $\alpha$ being independent
of $t$.
\\We say that the process $X^n$ is \emph{ergodic in shape}, resp.\@
\emph{transient in shape}, whenever the process $H^n$ is ergodic, resp.
transient. The notation $\PP_x$, resp.\@ $\PP_h$, will stand for the
distribution of the trajectory $(X^n_t,t\geq 0)$ starting from $x$, resp. of\@
the trajectory $(H^n_t,t\geq 0)$ starting from $h$. If there is any ambiguity on
the parameter $\beta$, the notation $\PP_x^\beta$ will be used instead. The null
vector will be denoted by $\ze$.
\\This simple model was first described by Gates and Westcott in
\cite{Kinetics1}, where attention was focused on the special case
$\beta_2>\beta_0$, and $$\beta_1=(\beta_0+\beta_2)/2.$$ Under this assumption
the process $H^n$ with periodic conditions enjoys a remarkable \emph{dynamic
reversibility} property that implies ergodicity, and even allows to derive an
exact computation of the invariant distribution. Unfortunately without this
assumption on $\beta$, there is no such simple way to determine whether
ergodicity occurs or not. However we can make a naive remark: since $\beta_0$ is
the statistic speed of peaks and $\beta_2$ is the one of holes, basic intuition
says that increasing $\beta_0$ should make the shape more irregular, making the
process $H^n$ more likely to be transient. Conversely, increasing $\beta_2$
should make the shape smoother, making the process $H^n$ more likely to be
recurrent.\\Gates and Westcott later proved several results about the problem of
recurrence in shape for other parameters, by means of Foster criteria with quite
simple Lyapunov functions. Theorem 2 in \cite{MarkovModels} states that for
periodic conditions and $n\geq 2$, $H^n$ is transient if $\beta_2<\beta_0$.
Ergodicity is shown to hold for
\begin{equation}\beta_1,\beta_2>(n-1)^2\beta_0\label{ncarre},\end{equation} and
a similar condition for ergodicity is also obtained for a process with a
two-dimensional grid of sites. Of course when $n$ is large such conditions are
very restrictive.
\\The family $(\Delta_jX^n_t,t\geq 0)$ is said to be \emph{exponentially tight}
if $$\PP_\ze(|\Delta_j X^n_t|\geq k)=\oexpt{t}{k}.$$ We also say that the family
$(H^n_t,t\geq 0)$ is exponentially tight if for $j=1,\dots n-1$,
$(\Delta_jX^n_t,t\geq 0)$ is exponentially tight. Obviously, exponential
tightness of the process $(H^n_t,t\geq 0)$ implies that it is ergodic with an
invariant distribution having exponential tails.
\\From Theorem 1.2 in \cite{AMS} we get: 

\begin{Th}\label{AMS}If $n\geq 2$ and $\beta_0<\beta_1\leq\beta_2$ then
$(H^n_t,t\geq 0)$ is exponentially tight, and hence ergodic. Moreover there
exists $d_n<\beta_1$ such that $$\PP_\ze(X^n_t(n)\geq d_n t)=\oexp{t}.$$
\end{Th}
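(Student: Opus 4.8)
The plan is to establish exponential tightness first and then feed it into the growth estimate for the rightmost pile. For tightness, note that for every $j$ one has $|\Delta_j X^n_t| = |X^n_t(j)-X^n_t(j+1)| \le R_t$, where $R_t := \max_i X^n_t(i) - \min_i X^n_t(i)$ is the range; since $R_t$ depends only on the shape $H^n_t$, it suffices to prove $\PP_\ze(R_t \ge k) = \oexpt{t}{k}$, which then gives exponential tightness of $H^n$ and, as noted in the text, ergodicity with exponentially decaying invariant tails. The driving mechanism is the hypothesis $\beta_0 < \beta_1 \le \beta_2$: a site lying strictly above both neighbours ($V_j=0$, a peak) grows at the slow rate $\beta_0$, while a site strictly below both ($V_j=2$, a hole) grows at the fast rate $\beta_2 \ge \beta_1 > \beta_0$, so the profile is self-smoothing. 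I would turn this into a Foster--Lyapunov estimate for the generator $\mathcal{L}$ of $H^n$ using the exponential test function $W(h) = \sum_{j=1}^n e^{\gamma s_j(h)}$, where $s_j(h) = x(j)-\min_i x(i) \ge 0$ is the height of site $j$ above the current floor (a function of $h$ alone) and $\gamma > 0$ is taken small. Proving $\mathcal{L} W \le -cW + C$ for constants $c,C>0$ gives $\sup_t \EE_\ze[W(H^n_t)] < \infty$ by Gr\"onwall, and then $\PP_\ze(R_t \ge k) \le e^{-\gamma k}\,\EE_\ze[e^{\gamma R_t}] \le e^{-\gamma k}\sup_t\EE_\ze[W(H^n_t)]$ is the desired $\oexpt{t}{k}$.

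The delicate point in this step is that a wide plateau sitting at the minimal height grows only at its two edge sites (where $V=1$, rate $\beta_1$) while its interior sites have equal neighbours ($V=0$, rate $\beta_0$); thus the instantaneous upward drift of the floor can be as small as order $\beta_0$, and the strictly negative drift of $W$ is recovered only after accounting for such plateaus being eroded from their edges at rate $\beta_1 > \beta_0$. Controlling this plateau effect, uniformly over configurations, is the technical heart of the matter and is exactly what Theorem 1.2 of \cite{AMS} provides; I would import its drift estimate rather than reprove it.

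For the growth bound, observe that under the zero condition $x(n+1)=0 \le x(n)$, so the right neighbour of site $n$ never contributes and $V_n(x) = \ind_{\{x(n-1)>x(n)\}} \in \{0,1\}$. Hence the counting process $X^n_t(n)$ has intensity $\beta_{V_n} = \beta_1 - (\beta_1-\beta_0)\,\ind_{\{\Delta_{n-1}X^n_s \le 0\}}$, and is in particular dominated by a Poisson process of rate $\beta_1$. Writing $X^n_t(n) = \int_0^t \beta_{V_n(X^n_s)}\,\ud s + M_t$ with $M_t$ a martingale gives $X^n_t(n) = \beta_1 t - (\beta_1-\beta_0)\int_0^t \ind_{\{\Delta_{n-1}X^n_s\le 0\}}\,\ud s + M_t$. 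By the ergodicity just proved, the invariant law $\pi$ of $H^n$ assigns positive mass $p_0 = \pi(\Delta_{n-1}\le 0) > 0$, so site $n$ should grow slowly a positive fraction of the time. Concretely, it suffices to fix $\delta \in (0,p_0)$ and show that the occupation time satisfies $\int_0^t \ind_{\{\Delta_{n-1}X^n_s\le 0\}}\,\ud s \ge \delta t$ outside an event of probability $\oexp{t}$; setting $d_n := \beta_1 - \tfrac12\delta(\beta_1-\beta_0) < \beta_1$ and bounding the linear-in-$t$ deviations of $M_t$ by an exponential (Dynkin--Chebyshev) martingale estimate, the probabilities of both exceptional events being $\oexp{t}$, then yields $\PP_\ze(X^n_t(n)\ge d_n t) = \oexp{t}$.

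The main obstacle, both here and in the tightness step, is the exponential bound on the occupation time, i.e.\@ ruling out that $\Delta_{n-1}$ stays positive for almost all of $[0,t]$. The favourable local fact is that whenever $\Delta_{n-1}X^n_s > 0$ one has $V_{n-1}\in\{0,1\}$ and $V_n = 1$, so $\Delta_{n-1}$ has nonpositive drift and is pushed back into $\{\le 0\}$; combined with the exponential tightness of $\Delta_{n-1}$ this yields a regeneration structure on which a large-deviation upper bound for the additive functional $\int_0^t \ind_{\{\Delta_{n-1}\le 0\}}\,\ud s$ can be based. Upgrading geometric ergodicity to such a uniform exponential control of empirical time averages is the hard part; it is precisely the information packaged in Theorem 1.2 of \cite{AMS}, from which both the exponential tightness of $(H^n_t)$ and the estimate $\PP_\ze(X^n_t(n)\ge d_n t)=\oexp{t}$ follow.
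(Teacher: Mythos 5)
Stripped of its scaffolding, your proof coincides with the paper's: the paper gives no argument for this theorem at all, but simply quotes Theorem 1.2 of \cite{AMS}, and your proposal likewise ends by deferring both assertions --- the exponential tightness and the velocity bound --- to that same reference. Your two sketches carry no load, and rightly so. The Foster--Lyapunov inequality $\mathcal{L}W\le -cW+C$ actually fails for your $W$: on any configuration having a plateau of width at least two at the minimal height together with a high spike, every admissible jump increases $W$ (the floor cannot rise in one jump, which is the only move that decreases $W$), and the spike alone contributes a positive term of order $\gamma W$, so no uniform negative drift holds. Likewise, ergodicity alone --- even geometric ergodicity --- does not give the uniform-in-$t$ exponential bound on occupation-time deviations that your second argument needs. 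So the plateau difficulty you flag is not a technicality to be imported; it is the entire content of the proof in \cite{AMS}, and your honesty in deferring to it is what makes the proposal match the paper.

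The one genuine gap is the boundary case $\beta_1=\beta_2$. Theorem 1.2 of \cite{AMS} is proved under the strict inequalities $\beta_0<\beta_1<\beta_2$, whereas the statement you must prove allows $\beta_0<\beta_1\le\beta_2$, and you use the hypothesis with possible equality throughout (e.g.\@ writing $\beta_2\ge\beta_1>\beta_0$). Citing \cite{AMS} therefore does not literally cover the claim. The only substantive content the paper adds to the citation is precisely the remark that the proof in \cite{AMS} goes through unchanged when $\beta_1=\beta_2$; this point --- that the cited theorem's hypothesis is strictly narrower than the one being asserted, so that some verification is required to bridge the two --- is absent from your proposal and needs to be added for the argument to be complete.
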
 

We point out that this result is actually given for $\beta_0<\beta_1<\beta_2$
but the reader may verify that its proof works exactly the same if we take
$\beta_1=\beta_2$. We will pick up several ideas of the approach in \cite{AMS}
in order to give weaker conditions for ergodicity in shape. Our notations will
be consistent with this reference as much as possible. Before stating our
results we begin by defining two useful notions: growth rate and monotonicity.
\begin{Prop}
Suppose that $H^n$ is ergodic and let $\pi^n$ be its invariant distribution. There exists $v^n>0$ such that for $j=1,\dots,n$, almost surely, $$\lim_{t\to\infty}\frac{X^n_t(j)}{t}=v^n.$$
Moreover, for any $j=1,\dots,n$, we have $v^n=\sum_{h\in\ZZ^{n-1}}\beta_{V_j(h)}\pi^n(h)$.
\end{Prop}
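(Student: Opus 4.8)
The plan is to prove the existence of the common growth rate $v^n$ via the ergodic theorem for Markov processes, and then identify it using the jump-rate structure.

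Let me set up the argument. The process $H^n$ is ergodic with invariant distribution $\pi^n$. The coordinate $X^n_t(j)$ counts the total number of depositions at site $j$ up to time $t$, starting from $\mathbf{0}$. The key observation is that $X^n_t(j)$ is a counting process whose jumps occur at rate $\beta_{V_j(H^n_{t^-})}$, i.e. the rate depends on the current state only through the shape $H^n$. The goal is to show the time-average of this rate converges to the stationary average.

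First I would write $X^n_t(j)$ as a sum of a martingale and a compensator. Since $X^n_t(j)$ jumps by $+1$ at rate $\beta_{V_j(H^n_t)}$, the compensator is $A_t^{(j)}=\int_0^t \beta_{V_j(H^n_s)}\,\ud s$, so that $M_t^{(j)}=X^n_t(j)-A_t^{(j)}$ is a martingale. The plan is then to treat the two pieces separately.

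\begin{itemize}
\item[-] For the compensator, I would apply the ergodic theorem for the ergodic Markov process $H^n$: since the function $h\mapsto \beta_{V_j(h)}$ is bounded (it takes only the three values $\beta_0,\beta_1,\beta_2$), it is $\pi^n$-integrable, and almost surely
$$\frac1t\int_0^t \beta_{V_j(H^n_s)}\,\ud s \longrightarrow \sum_{h\in\ZZ^{n-1}}\beta_{V_j(h)}\pi^n(h)=:v^n_j.$$
\item[-] For the martingale, I would show $M_t^{(j)}/t\to 0$ almost surely. Because the jump rates are bounded, the predictable quadratic variation $\langle M^{(j)}\rangle_t=A_t^{(j)}$ grows at most linearly in $t$, and a standard strong-law argument for martingales with bounded jumps (e.g. the law of large numbers for martingales, or a Borel--Cantelli estimate along integer times combined with monotonicity of $X^n_t(j)$ between them) gives $M_t^{(j)}/t\to 0$.
\end{itemize}

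Combining the two yields $X^n_t(j)/t\to v^n_j$ almost surely, with $v^n_j=\sum_h \beta_{V_j(h)}\pi^n(h)$. Since each $\beta_k>0$ and the jump rate is bounded below (at least one of the three values is attained with positive $\pi^n$-probability, and all are strictly positive), we get $v^n_j>0$.

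It remains to show $v^n_j$ does not depend on $j$, which gives the common value $v^n$. This is where I expect the main subtlety to lie. The natural idea is that the differences $X^n_t(j)-X^n_t(j')$ are exactly controlled by the shape: indeed $X^n_t(j)-X^n_t(j+1)=\Delta_j X^n_t=H^n_t(j)$, and since $H^n$ is ergodic its coordinates are tight (in fact the invariant distribution has exponential tails by the discussion preceding the statement), so $H^n_t(j)/t\to 0$ almost surely. Therefore $(X^n_t(j)-X^n_t(j+1))/t\to 0$, forcing $v^n_j=v^n_{j+1}$ for every $j$, and hence a single growth rate $v^n$. The only point requiring a little care is deducing $H^n_t(j)/t\to 0$ from ergodicity alone; by stationarity after convergence to equilibrium, or directly from tightness, $H^n_t(j)$ stays $o(t)$ almost surely, which closes the argument.
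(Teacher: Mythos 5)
Your proof is correct and follows essentially the same route as the paper: the same decomposition of $X^n_t(j)$ into the compensator $\int_0^t \beta_{V_j(H^n_s)}\,\ud s$ plus a martingale, the ergodic theorem for the time-averaged rate, and a Doob/Borel--Cantelli (equivalently, martingale strong law) argument giving $M^{(j)}_t/t\to 0$, which is exactly what the paper does via Doob's inequality on the intervals $[n^2,(n+1)^2]$. One remark: you explicitly treat a point the paper leaves implicit, namely that the limit is the same for all $j$; your argument there is sound up to a small precision, since tightness of $H^n_t(j)$ under ergodicity only gives $H^n_t(j)/t\to 0$ \emph{in probability} (and the exponential-tail claim is not available under the bare ergodicity hypothesis of the Proposition), but this suffices: the first part already gives $H^n_t(j)/t = X^n_t(j)/t - X^n_t(j+1)/t \to v^n_j - v^n_{j+1}$ almost surely, and the almost sure limit must agree with the in-probability limit $0$, forcing $v^n_j = v^n_{j+1}$.
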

\begin{proof}
Let $j\in\{1,\dots,n\}$. Since $(X^n_t(j),t\geq 0)$ is a counting process with
intensity $\beta_{V_j(X^n_t)}$, we have that 
\begin{equation}M_t:=X^n_t(j)-\int_0^t \beta_{V_j(X^n_s)}\ud s\text{ is a
martingale},\label{mart1}\end{equation} and also

\begin{equation}L_t:=M_t^2-\int_0^t \beta_{V_j(X^n_s)}\ud s\text{ is a
martingale}.\label{mart2}\end{equation} Since ergodicity yields the a.s.
convergence $$\lim_{t\to\infty}\frac 1t\int_0^t \beta_{V_j(X^n_s)}\ud
s=\sum_{h\in\ZZ^{n-1}}\beta_{V_j(h)}\pi^n(h),$$ it now remains to show that
$\lim t^{-1}M_t=0$, \as~ But (\ref{mart1}) allows us to use Doob's inequality:
for $r\leq t$,

\begin{align*}
\PP\left(\sup_{s\in[r,t]}\frac{|M_s|}{s}\geq\eps\right) &\leq
\PP\left(\sup_{s\in[r,t]}|M_s|\geq r\eps\right)\\
&\leq \frac{\EE[M_t^2]}{r^2\eps^2}\\
&\leq\frac{Kt}{r^2\eps^2},
\end{align*}
where $K=\max(\beta_0,\beta_1,\beta_2)$. The last inequality is a direct
consequence of (\ref{mart2}). Thus we have $\PP(\sup_{n^2\leq
s\leq(n+1)^2}s^{-1}|M_s|\geq\eps)\leq K(n+1)^2/(\eps^2 n^4)$, and we may can
conclude by Borel-Cantelli's lemma.
\end{proof}

For $n=2$, the simplicity of the dynamics allows us to compute the exact value
of the growth rate:
\begin{Prop}\label{2sites}
$H^2$ is ergodic if and only if $\beta_1>\beta_0$. In this case, $v^2=\frac{2\beta_0\beta_1}{\beta_0+\beta_1}$.
\end{Prop}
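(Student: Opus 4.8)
The plan is to observe that for $n=2$ the shape process lives on $\ZZ$ and is nothing but a nearest-neighbor birth-death chain, for which ergodicity and the invariant distribution can be computed by hand. First I would identify the transition rates explicitly. With the zero condition $x(0)=x(3)=0$ one checks that $V_1(h)=\ind_{\{h<0\}}$ and $V_2(h)=\ind_{\{h>0\}}$, so that, writing $h=\Delta_1 X^2$, the chain $H^2$ jumps $h\to h+1$ (deposition at site $1$) at rate $\beta_{V_1(h)}$ and $h\to h-1$ (deposition at site $2$) at rate $\beta_{V_2(h)}$. Concretely the up-rate equals $\beta_0$ for $h\geq 0$ and $\beta_1$ for $h<0$, while the down-rate equals $\beta_1$ for $h>0$ and $\beta_0$ for $h\leq 0$; this is consistent with the symmetry $\pi(h)=\pi(-h)$ expected from the symmetry property stated above.

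For the ergodicity dichotomy I would exploit that a one-dimensional nearest-neighbor chain is automatically reversible, so I would look for a reversible measure solving the detailed balance equations $\pi(h)q(h,h+1)=\pi(h+1)q(h+1,h)$. Solving the resulting recursion gives $\pi(h)=\pi(0)\rho^{|h|}$ with $\rho=\beta_0/\beta_1$. This measure is summable, hence normalizable into an invariant probability, if and only if $\rho<1$, that is $\beta_1>\beta_0$, which yields ergodicity in that regime. To complete the equivalence I would rule out the two remaining cases: when $\beta_1<\beta_0$ the restriction of $H^2$ to $\{h\geq 1\}$ behaves as a random walk with positive drift $\beta_0-\beta_1$, forcing $|H^2|\to+\infty$ and hence transience, while the boundary case $\beta_1=\beta_0$ reduces to the symmetric nearest-neighbor walk on $\ZZ$, which is null recurrent; in neither case is the chain ergodic.

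It then remains to compute $v^2$ in the ergodic regime, where I would simply invoke the growth-rate formula $v^2=\sum_{h}\beta_{V_j(h)}\pi^2(h)$ from the previous Proposition, valid for any $j$, taking $j=1$. Using $V_1(h)=\ind_{\{h<0\}}$ the sum splits as $v^2=\beta_0\sum_{h\geq 0}\pi(h)+\beta_1\sum_{h<0}\pi(h)$, and the geometric form of $\pi$ together with the normalization $\pi(0)=(1-\rho)/(1+\rho)$ collapses everything: the key cancellation is $\beta_1\rho=\beta_0$, which turns the relevant bracket into $2\beta_0$ and yields $v^2=2\beta_0/(1+\rho)=2\beta_0\beta_1/(\beta_0+\beta_1)$.

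There is no serious obstacle in this argument; the only point requiring slight care is the treatment of the two non-ergodic cases, since summability of the reversible measure settles positive recurrence but one must still argue separately that the boundary case $\beta_1=\beta_0$ is merely null recurrent, and thus fails to be ergodic, rather than only checking a sign of drift.
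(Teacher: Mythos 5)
Your proposal is correct and follows essentially the same route as the paper: identify $H^2=X^2(1)-X^2(2)$ as a nearest-neighbour random walk on $\ZZ$ with up/down rates $\beta_0,\beta_1$ depending on the sign of $h$, exhibit the geometric reversible measure $\pi(h)\propto(\beta_0/\beta_1)^{|h|}$ (normalizable exactly when $\beta_1>\beta_0$), and then evaluate $v^2$ via the formula $v^2=\sum_h\beta_{V_1(h)}\pi^2(h)$ from the preceding Proposition. Your explicit treatment of the cases $\beta_1<\beta_0$ (transience by drift) and $\beta_1=\beta_0$ (null recurrence of the symmetric walk) merely fills in what the paper dismisses as straightforward, and is accurate.
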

\begin{proof}
$H^2_t=X^2_t(1)-X^2_t(2)$ is a random walk on $\ZZ$, whose jump rates are given
by: $$\begin{cases}q(i,i+1)=\beta_0,~q(i,i-1)=\beta_1,&\text{ if } i>0,\\
q(0,1)=q(0,-1)=\beta_0,&   \\  q(i,i+1)=\beta_1,~q(i,i-1)=\beta_0,&\text{ if }
i<0.\end{cases}$$
Thus the first assertion is straightforward. If $\beta_1>\beta_0$, it is easy
to check that the probability measure
$$\mu(\{i\})=\frac{\beta_1-\beta_0}{\beta_1+\beta_0}\left(\frac{\beta_0}{\beta_1
}\right)^{|i|}$$ is a reversible measure for this random walk, so it is the
invariant distribution of the process. We can then compute:
$$v^2=\sum_{i\in\ZZ}\pi_2(\{i\})(\beta_0\ind_{i\geq 0}+\beta_1\ind_{i<
0})=\beta_0\pi^2(\ZZ_+)+\beta_1\pi^2(\ZZ_-^*)=\frac{2\beta_0\beta_1}{
\beta_0+\beta_1}.$$
\end{proof}
We define the canonical partial order $\leq$ in an obvious way: for two
configurations $x,y\in\NN^n$, we write $x\leq y$ if $$x(j)\leq y(j),~1\leq j\leq
n.$$ The process $X^n$ is said to be \emph{attractive} if for any $x\leq y$,
there exists a coupling of two processes $(X^n_t,t\geq 0)$ and $(Y^n_t,t\geq
0)$, with distributions $\PP_x$ and $\PP_y$, such that almost surely, 
$$\forall t\geq 0,~X^n_t\leq Y^n_t.$$
\begin{Lem}\label{mon}
Let $n\geq 2$. If $\beta_0\leq\beta_1\leq\beta_2$, then $X^n$ is attractive.
\end{Lem}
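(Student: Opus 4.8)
The plan is to build the coupling explicitly by means of the Poisson construction recalled above. Fix $x\leq y$ in $\NN^n$ and drive both processes with \emph{one} family of Poisson processes $(N_{k,j})$ satisfying the three listed properties: let $(X^n_t)$ start from $x$ and $(Y^n_t)$ start from $y$, applying rule (\ref{poisson}) to each with the same jump times. Since each coordinate then has the correct marginal law, $(X^n_t)$ has distribution $\PP_x$ and $(Y^n_t)$ has distribution $\PP_y$; everything reduces to showing that $X^n_t\leq Y^n_t$ for every $t$, almost surely.

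Because the $N_{k,j}$ are assembled from the mutually independent processes $\tilde N_{k,j}$, almost surely no two of the latter jump simultaneously, so I can enumerate the jump times and argue that the order $X^n_{t^-}\leq Y^n_{t^-}$ survives each one. At a given jump time a single site $j$ is updated, and rule (\ref{poisson}) subjects both processes to the \emph{same} threshold $b_k$ (read off from the noise, not from the state): site $j$ grows iff $\beta_{V_j(\cdot)}\geq b_k$. Only the $j$-th coordinate can move, and by at most one unit; hence if $X^n_{t^-}(j)<Y^n_{t^-}(j)$ the inequality at $j$ cannot be destroyed, and the only way the order could break is when $X^n_{t^-}(j)=Y^n_{t^-}(j)$ while $X^n$ grows at $j$ and $Y^n$ does not.

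To exclude this, I would invoke the hypothesis $\beta_0\leq\beta_1\leq\beta_2$ twice. First, on the event $X^n_{t^-}(j)=Y^n_{t^-}(j)$ the componentwise inequality $X^n_{t^-}\leq Y^n_{t^-}$ together with the zero convention $x(0)=x(n+1)=0$ gives $V_j(X^n_{t^-})\leq V_j(Y^n_{t^-})$: if a neighbour of $j$ is strictly higher than site $j$ in $X^n$, then being at least as high in $Y^n$ and facing the same height at $j$, it is strictly higher in $Y^n$ as well, so each of the two indicators defining $V_j$ can only increase. Second, monotonicity of $k\mapsto\beta_k$ gives $\beta_{V_j(X^n_{t^-})}\leq\beta_{V_j(Y^n_{t^-})}$, so if $X^n$ grows ($\beta_{V_j(X^n_{t^-})}\geq b_k$) then a fortiori $\beta_{V_j(Y^n_{t^-})}\geq b_k$ and $Y^n$ grows too. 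The forbidden configuration never arises, so the partial order is preserved at every jump; starting from $X^n_0\leq Y^n_0$, an induction over the jump times yields $X^n_t\leq Y^n_t$ for all $t\geq 0$, which is attractiveness.

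The substantive point — really the only one — is this monotonicity $V_j(X)\leq V_j(Y)$ when $X(j)=Y(j)$ and $X\leq Y$, together with its validity at the boundary sites $j=1$ and $j=n$, where the zero convention replaces the missing neighbour by height $0$ for both processes. Once this elementary fact is secured, the pathwise nesting of the jump sets of $N_{0,j},N_{1,j},N_{2,j}$ guaranteed by the Poisson construction does all the remaining work, and the coupling is immediate.
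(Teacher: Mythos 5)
Your proof is correct and follows essentially the same route as the paper: both processes are driven by the same Poisson processes, and the key observation is that when $X^n_{t^-}(j)=Y^n_{t^-}(j)$ with $X^n_{t^-}\leq Y^n_{t^-}$ one has $V_j(X^n_{t^-})\leq V_j(Y^n_{t^-})$, so by monotonicity of $k\mapsto\beta_k$ any jump of $X^n$ at site $j$ forces the corresponding jump of $Y^n$. Your write-up merely spells out the details (marginal laws, boundary convention, induction over jump times) that the paper leaves implicit.
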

\begin{proof}
Let $x\leq y$. We consider $X^n$ and $Y^n$ obtained with the above Poisson
construction, with $X^n_0=x$, $Y^n_0=y$, both using the same Poisson processes.
Suppose that $$X^n_{t^-}(j)=Y^n_{t^-}(j)$$ and $N_{k,j}$ jumps at time $t$. We
then have $V_j(X^n_{t^-})\leq V_j(Y^n_{t^-})$. Consequently, if $X^n_\cdot(j)$
jumps at time $t$, then so does $Y^n_\cdot(j)$ and hence the inequality
$X^n(j)\leq Y^n(j)$ is preserved.
\end{proof}
We are now interested in comparing two processes with same initial states, but
different numbers of sites, or different parameters. In general it is not true
that increasing one of the parameters increases the process himself. However we
have a weaker result which is sufficient for our purpose.
\begin{Lem}
Let $n\geq 2$ and $\beta\in]0,+\infty[^3$. If $\beta_0\leq\beta_1\leq\beta_2$
and $n\leq m$, then there exists a coupling of two processes $X^n$ and $X^m$
distributed as $\PP_x^{\beta,n}$ and $\PP_x^{\beta,m}$, such that $$\forall
t\geq 0,~X^n_t\leq X^m_t.$$
Let $\beta'\in]0,+\infty[^3$. If $\beta$ and $\beta'$ are such that
$\beta_k\leq\beta'_\ell$ for $k\leq \ell$, then there exists a coupling of two
processes $X^n_t$ and $X'^n_t$ distributed as $\PP_x^{\beta,n}$ and
$\PP_x^{\beta',n}$, such that $$\forall t\geq 0,~X^n_t\leq X'^n_t.$$
\label{lemcouplage}
\end{Lem}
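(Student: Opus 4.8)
The plan is to establish both couplings by driving the two processes with a common source of randomness, exactly in the spirit of the proof of Lemma~\ref{mon}, and to propagate the ordering inductively over the (almost surely locally finite) sequence of jump times. The only genuinely new inputs will be a boundary comparison for the first statement and a careful reading of the hypothesis $\beta_k\le\beta'_\ell$ for the second.

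For the first statement (different numbers of sites, same increasing parameter $\beta$), I would embed $x\in\NN^n$ into $\NN^m$ by keeping the coordinates $1,\dots,n$ and setting the extra coordinates to $0$, so that the inequality $X^n_t\le X^m_t$ is understood on the sites $1,\dots,n$. I would then build both processes with the Poisson construction of (\ref{poisson}): the \emph{same} nested processes $N_{k,j}$ drive $X^n$ and $X^m$ on the shared sites $j=1,\dots,n$, while $X^m$ uses additional, independent nested processes on $j=n+1,\dots,m$. Assuming inductively that $X^n_{t^-}\le X^m_{t^-}$ on sites $1,\dots,n$, I would check that at a jump time of $N_{k,j}$ with $j\le n$ the ordering is preserved: when $X^n_{t^-}(j)<X^m_{t^-}(j)$ this is automatic, while when $X^n_{t^-}(j)=X^m_{t^-}(j)$ one argues as in Lemma~\ref{mon} that $V_j(X^n_{t^-})\le V_j(X^m_{t^-})$, whence $\beta_{V_j(X^n_{t^-})}\le\beta_{V_j(X^m_{t^-})}$ and a jump of $X^n$ forces a jump of $X^m$.

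The step that requires attention is the rightmost shared site $j=n$, where the two processes see different right neighbours: for $X^n$ the zero condition imposes the neighbour $0$, while for $X^m$ the neighbour is the genuine site $n+1$ with height $X^m_{t^-}(n+1)\ge 0$. At equality $X^n_{t^-}(n)=X^m_{t^-}(n)=a$ one has $\ind_{\{0>a\}}=0\le\ind_{\{X^m_{t^-}(n+1)>a\}}$ together with $\ind_{\{X^n_{t^-}(n-1)>a\}}\le\ind_{\{X^m_{t^-}(n-1)>a\}}$, so again $V_n(X^n_{t^-})\le V_n(X^m_{t^-})$; this is precisely the monotone effect of raising the effective boundary by adding sites. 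Jumps at sites $j>n$ only modify coordinates of $X^m$ that do not enter the comparison, so they are harmless.

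For the second statement (same number of sites, parameters $\beta$ and $\beta'$) the layered construction is tied to a single parameter vector, so I would instead use a thinning coupling. At each site $j$ I would take a Poisson process of rate $B:=\max_{0\le k\le 2}\max(\beta_k,\beta'_k)$, independent across sites, and attach to each of its atoms an independent mark uniform on $[0,B]$; at such an atom with mark $U$, I let $X^n(j)$ jump when $U\le\beta_{V_j(X^n_{t^-})}$ and $X'^n(j)$ jump when $U\le\beta'_{V_j(X'^n_{t^-})}$, which by standard thinning yields the correct marginal rates. Propagating $X^n_{t^-}\le X'^n_{t^-}$, the only case to examine is $X^n_{t^-}(j)=X'^n_{t^-}(j)$, where the argument of Lemma~\ref{mon} gives $V_j(X^n_{t^-})\le V_j(X'^n_{t^-})$; the hypothesis $\beta_k\le\beta'_\ell$ for $k\le\ell$, applied with $k=V_j(X^n_{t^-})$ and $\ell=V_j(X'^n_{t^-})$, gives $\beta_{V_j(X^n_{t^-})}\le\beta'_{V_j(X'^n_{t^-})}$, so $\{U\le\beta_{V_j(X^n_{t^-})}\}\subseteq\{U\le\beta'_{V_j(X'^n_{t^-})}\}$ and a jump of $X^n$ forces one of $X'^n$. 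The main obstacle is thus purely in setting up a monotone coupling across \emph{different} parameters; once the uniform marks are introduced the hypothesis on $\beta,\beta'$ is seen to be exactly the inclusion needed, and the rest is the same inductive bookkeeping as in Lemma~\ref{mon}.
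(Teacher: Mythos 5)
Your proposal is correct and is essentially the paper's own argument: the paper's proof merely states that suitable Poisson constructions yield the desired couplings and leaves the details to the reader, and your write-up supplies exactly those details in the spirit of Lemma \ref{mon} (common driving noise plus inductive preservation of the ordering at jump times). The two points you add --- the boundary comparison $V_n(X^n_{t^-})\le V_n(X^m_{t^-})$ at the rightmost shared site, and the uniform-mark thinning for two different parameter vectors (a harmless variant of the paper's nested construction, which could equally be run with the six values $\beta_k,\beta'_\ell$ ranked together) --- are precisely the checks the paper implicitly delegates.
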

\begin{proof}
Here again we can use Poisson constructions in such a way that the obtained
processes enjoy the desired properties. The details are left to the reader.
\end{proof}

\section{Results}  
As already noticed, $X^n$ is transient in shape (for periodic conditions) when
$\beta_2<\beta_0$. This is not a surprise since this inequality says that peaks
grow faster than holes. Our first Theorem describes more precisely the
asymptotic behaviour of the process $X^n$, with zero-condition, under this
assumption. It says that almost surely the shape ultimately adopts a \emph{comb}
shape. The exact form of the comb actually depends on the position of $\beta_1$
relatively to $\beta_2$ and $\beta_0$ so we actually establish three analogue
results. To illustrate this we show three simulations showing realizations of $t^{-1}X^n_t$ for
$t=1000$ and three different parameters.

\begin{figure}[h]
\begin{center}
\includegraphics[scale=0.16]{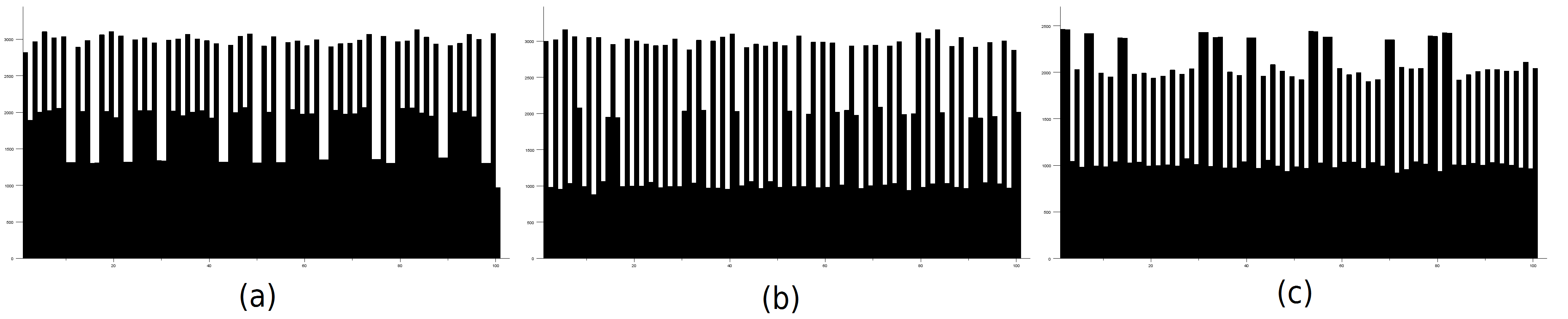}
\caption{$X^n_t$ pour $n=100$,
$t=1000$, et \textbf{(a)} $\beta=(3,1,2)$, \textbf{(b)} $\beta=(3,2,1)$, \textbf{(c)} $\beta=(2,3,1)$.}
\label{peignes}
\end{center}
\end{figure}

Before stating the result we need to introduce some further notation. We write
$t^{-1}X^n_t$ for the vector $\left(t^{-1}X^n_t(1),\dots,t^{-1}X^n_t(n)\right)$.
For two vectors $a=(a_1,\dots a_k)$ and $b=(b_1,\dots b_\ell)$ we denote by
$(a,b)$ the vector $(a_1,\dots a_k,b_1,\dots b_\ell)$. $(-)$ denotes the empty
vector. Let  $\A_1$ be the set of all the $n$-uples of the form
\begin{equation}
(a_1,\beta_2,a_2,\dots,a_{k-1},\beta_2,a_k),\label{A1}\end{equation}
where $k\in\NN$, and $a_i=(\beta_0)\text{ or }(v^2,v^2)$ for $1\leq i\leq k$.
Similarly we define $\A_2$ as the set of all the $n$-uples of the form
\begin{equation} (\beta_{i_1},\dots \beta_{i_n}),\label{A2}\end{equation}
where $i_j\in\{0,1,2\}$,~$i_j\neq i_{j+1} \text{ for }1\leq j\leq n-1$ and
$i_1,i_n\neq 2$. Let $H^{n,\infty}$ be the shape process with infinite
condition. The proof of Proposition \ref{2sites} also works with $H^2$ being
replaced by $H^{2,\infty}$, $\beta_0$ by $\beta_1$ and $\beta_1$ by $\beta_2$.
Thus, whenever $\beta_2>\beta_1$ the process $H^{2,\infty}$ is ergodic with
growth rate $v^{2,\infty}=2\beta_1\beta_2/(\beta_1+\beta_2)$. Let $\A_3$ be the
set of all $n$-uples of the form
\begin{equation}
(e_L,\beta_0,a_1,\beta_0,a_2,\dots,a_{k-1},\beta_0,a_k,\beta_0,e_R),\label{A3}
\end{equation}
where $k\in\NN$, $a_i=(\beta_2)\text{ or }(v^{2,\infty},v^{2,\infty})$ for
$1\leq i\leq k$, and $e_L,e_R=(\beta_1)$ or $(-)$.
\\ In Section \ref{preuvepeigne} we prove:
\begin{Th}\label{peigne}~Let $n\geq 2$ and $x\in\ZZ^n$.
\begin{itemize}
\item[(i)] If $\beta_2<\beta_0\leq\beta_1$ then $t^{-1}X^n_t$ converges
$\PP_x$-a.\@s.\@ and
$$\PP_x\left(\lim_{t\to\infty}t^{-1}X^n_t\in\A_1\right)=1.$$
\item[(ii)]If $\beta_2<\beta_1<\beta_0$ then $t^{-1}X^n_t$ converges
$\PP_x$-a.\@s.\@ and
$$\PP_x\left(\lim_{t\to\infty}t^{-1}X^n_t\in\A_2\right)=1.$$
\item[(iii)]If $\beta_1\leq\beta_2<\beta_0$ then $t^{-1}X^n_t$ converges
$\PP_x$-a.\@s.\@ and
$$\PP_x\left(\lim_{t\to\infty}t^{-1}X^n_t\in\A_3\right)=1.$$
\end{itemize}
\end{Th}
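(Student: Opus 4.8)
The plan is to argue by induction on $n$, establishing simultaneously the analogous statement for all four boundary conventions (zero--zero, zero--infinite, infinite--zero and infinite--infinite), since the inductive step unavoidably produces subsystems with one side governed by an infinite condition. The base cases $n\le 2$ are exactly Proposition~\ref{2sites} together with its infinite--condition analogue quoted above (and the trivial one--site case), which attach to an isolated or to an ergodic pair of sites one of the velocities $\beta_0$, $v^2$, $\beta_1$, $v^{2,\infty}$, $\beta_2$, according to how many of its phantom/real neighbours lie above it. The whole mechanism is a decoupling phenomenon: I would show that almost surely some site eventually becomes a \emph{permanent separator}, i.e.\ its value $V_j$ freezes --- at $0$ (a permanent peak, growing at $\beta_0$) or at $2$ (a permanent hole, growing at $\beta_2$) --- with the gaps to both neighbours tending to $+\infty$ or to $-\infty$ respectively.

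Once such a separator at site $j$ is in force from a time $T$ on, the Poisson construction makes the two sides evolve as \emph{independent} crystal processes driven by the disjoint families $(N_{k,i})_{i<j}$ and $(N_{k,i})_{i>j}$: a frozen hole behaves, for each neighbour, like the phantom of the zero condition (always below), while a frozen peak behaves like the phantom of the infinite condition (always above). Applying the strong Markov property at $T$ and the induction hypothesis to the two strictly shorter sides, then inserting the separator's own velocity, produces a limit of the prescribed form. The three regimes differ only through which separators occur and which pairs survive, and this is dictated by Proposition~\ref{2sites}: in case~(i), where $\beta_1\ge\beta_0$, the peak--pair subsystem is ergodic, so blocks of one or two sites (velocities $\beta_0$ and $v^2$) survive between $\beta_2$--holes, giving $\A_1$; in case~(ii) no two--site subsystem is ergodic, so every surviving block is a single site and adjacent velocities must differ, giving $\A_2$; in case~(iii), where $\beta_2\ge\beta_1$, the hole--pair subsystem is ergodic under the infinite condition, so single $\beta_2$--holes and $v^{2,\infty}$--pairs survive between $\beta_0$--peaks, giving $\A_3$. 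The boundary conventions automatically enforce the end constraints ($i_1,i_n\ne2$ in $\A_2$, the alternatives $e_L,e_R$ in $\A_3$) and the forced mismatch of the two velocities meeting at each seam.

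The hard part is the almost--sure existence of a permanent separator, and here the key is a clean, \emph{unconditional} drift estimate. A hole is never adjacent to a hole, so while site $j$ is a hole each neighbour has $V\le1$ and grows at rate $\ge\min(\beta_0,\beta_1)$; dually, a peak is never adjacent to a peak, so while $j$ is a peak each neighbour grows at rate $\le\max(\beta_1,\beta_2)$. Hence in cases~(i) and~(ii) the gaps $X^n_t(j\pm1)-X^n_t(j)$ at a hole dominate a biased random walk with drift $\ge\min(\beta_0,\beta_1)-\beta_2>0$, while in cases~(ii) and~(iii) the gaps at a peak have drift $\ge\beta_0-\max(\beta_1,\beta_2)>0$; in particular the separating site is a hole in case~(i), a peak in case~(iii), and of either available type in case~(ii). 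Using the monotone couplings of Lemmas~\ref{mon} and~\ref{lemcouplage} to make these comparisons rigorous, a renewal/Borel--Cantelli argument over successive excursions shows that once both gaps at an extremal site are simultaneously large they remain positive forever with overwhelming probability, so that $V_j$ indeed freezes. The genuinely delicate point --- which I expect to be the main obstacle --- is to guarantee that a separator of the \emph{right type} appears at all: in case~(i) peaks do not escape (a neighbouring slope grows at rate $\beta_1\ge\beta_0$), so the separation must be carried by a hole, and one must show that an interior hole recurs. This transience input is itself extracted from the same positive drift, together with the combinatorial remark that for $n\ge3$ every element of $\A_1$ contains an internal $\beta_2$. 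Finally, cases~(i) and~(iii) are exchanged by the height reversal $x\mapsto-x$, which swaps $\beta_0\leftrightarrow\beta_2$, peaks with holes, the zero with the infinite condition, and $v^2$ with $v^{2,\infty}$; exploiting this duality halves the work.

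Almost--sure convergence of $t^{-1}X^n_t$ is then delivered by the same scheme rather than proved separately: each separator site has a frozen $V_j$ and hence a deterministic velocity, by the martingale argument already used for the growth rate (the martingale $M_t=X^n_t(j)-\int_0^t\beta_{V_j(X^n_s)}\,\ud s$ satisfies $t^{-1}M_t\to0$ almost surely), while each maximal block between separators converges by the induction hypothesis. Concatenating these block limits with the separator velocities yields convergence of the full rescaled vector to a point of $\A_1$, $\A_2$ or $\A_3$, as claimed.
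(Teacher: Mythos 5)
Your architecture is essentially the paper's own: almost-sure emergence of a permanently frozen separator (a hole in case (i), a peak in case (iii)), decoupling of the two sides into independent crystal processes via the Poisson construction, and an induction on $n$ whose hypothesis is enlarged to the infinite-type boundary conditions (the paper does exactly this for (iii), with its hypothesis $(\H_n)$); your freezing estimate is the paper's bound (\ref{troustable}). The genuine gap is the step you yourself flag as the main obstacle: the almost-sure \emph{formation} of a separator, i.e.\ the paper's assertion $\PP_h(T_A<+\infty)=1$, and the way you propose to fill it cannot work. Both of your drift estimates are conditional on a hole (resp.\ a peak) already being present, so they say nothing about hole-free configurations, and the ``combinatorial remark'' that every element of $\A_1$ with $n\geq 3$ contains an internal $\beta_2$ is circular: whether the limit lies in $\A_1$ is precisely what is being proved. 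What is actually needed is an analysis of the hole-free configurations themselves. Such a configuration has a unique maximal site; the neighbour of the maximum climbs at rate $\beta_1\geq\beta_0$ and therefore catches up (recurrence, or positive drift, of the walk built from the clocks $N_{1,i-1}$ and $N_{0,i}$), which produces not a hole but a \emph{pair} of equal maximal sites; that pair then grows at rate $v^2$, and one needs $v^2<\beta_1$ (with a separate symmetric-walk argument when $\beta_1=\beta_0$) to make its neighbour catch up with its lower member; even then a weak hole is created only with probability at least $1/2$, by exchangeability of the two-site process, so the repeated attempts must be glued together by an abstract successive-trials result (the paper's Lemma \ref{lemmemarkov}, with its sets $B_i$ and $C_i$). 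None of these ingredients --- the twin-peak stage, the comparison of $\beta_1$ with $v^2$ rather than with $\beta_0$, the $1/2$ bound, the iteration lemma --- appears in your sketch; ``peaks do not escape'' alone does not yield a hole.

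Three smaller points. First, your reduction of (iii) to (i) by the height reversal $x\mapsto-x$ is not available: deposition is not invariant under reversal (the reflected dynamics removes particles), so there is no exact duality exchanging $\beta_0\leftrightarrow\beta_2$ and the zero with the infinite condition; the correspondence is only an analogy, and the paper in fact reruns the proof of (i) with $D_i$ in place of $A_i$, $v^{2,\infty}$ in place of $v^2$, and all inequalities reversed. Second, the time $T$ at which a separator becomes permanent is not a stopping time, so you cannot ``apply the strong Markov property at $T$''; the paper circumvents this by working on the events $E_{A_i}(t_0)$ for deterministic $t_0$ and taking a countable union, and your argument needs the same device. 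Third, in case (ii) the formation argument rests on recurrence of \emph{symmetric} walks (two slope clocks of equal rate $\beta_1$) and requires a site at distance two from the unique maximum, hence works only for $n\geq 4$; the case $n=3$ needs the separate treatment with both hole and peak separators, which your uniform induction scheme does not provide for.
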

\textbf{Remark.} It is plausible that the almost sure convergence of
$t^{-1}X^n_t$ holds even without the assumption $\beta_2<\beta_0$. For instance
when $\beta_0<\beta_2<\beta_1$ our belief, confirmed by computer simulations, is
that it is always the case that the $n$ sites ultimately divide in a certain
number of blocks (possibly one in the ergodic case) of various widths separated
by holes of unit length, each of these blocks being ergodic in shape. If this is
true then each site admits an asymptotic speed which is either $v^k$, $k$ being
the width of the block containing the site, or $\beta_2$ if the site is
ultimately a hole. Unfortunately we have not been able to prove this.
\\The next results concern the process with parameters lying in the domain
$$\D=\{\beta=(\beta_0,\beta_1,\beta_2):\beta_0<\beta_2<\beta_1\}.$$ We point out
that the three degrees of freedom actually reduce to two. We can indeed assume
$\beta_0=1$ because otherwise we can work with the process
$(X^n_{t/\beta_0},t\geq 0)$.
\\ Our first result in that direction is an abstract condition for ergodicity.
The value of $\beta_0$ being fixed from now on, our strategy is to give for each
$\beta_1>\beta_0$ a threshold value of $\beta_2$ above which ergodicity holds.
The main idea is to compare $X^n$ with an auxiliary process $\X^n$ which is
defined as the crystal process with parameters
\begin{equation}\tilde\beta_0=\beta_0,~\tilde\beta_1=\beta_1\text{ and
}\tilde\beta_2=\beta_1.\label{reftilde}\end{equation}
Anything relative to the process $\X^n$ will be denoted with the symbol $\sim$.
For $\beta_1>\beta_0$ and $n\geq 2$ we define 
$$\d_n(\beta_1)=\inf\{ d>0:~\PP_\ze(\X^n_t(n)\geq dt)=\oexp{t} \}.$$
Clearly $\d_n(\beta_1)\in[\beta_0,\beta_1]$. Moreover it follows from Lemma
\ref{lemcouplage} that 

\begin{equation}\d_n(\beta_1)\text{ is an increasing function of both } n\text{
and }\beta_1.\label{dtildencroissant}\end{equation}

\begin{Th}\label{recurrence0}
If $\beta_1>\beta_0$ and $\beta_2>\d_n(\beta_1)$ then $H^k$ is ergodic for
$k\leq n+2$.
\end{Th}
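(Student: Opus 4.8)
The plan is to prove the \emph{a priori} stronger statement that $H^k$ is exponentially tight for every $k\le n+2$; since exponential tightness implies ergodicity (as already observed in the excerpt), this is enough. By the reflection symmetry $(\Delta_1 X^k,\dots,\Delta_{k-1}X^k)$ having the same law as $(-\Delta_{k-1}X^k,\dots,-\Delta_1 X^k)$, it suffices to bound the upper tail $\PP_\ze(\Delta_j X^k_t\ge m)=\PP_\ze(X^k_t(j)-X^k_t(j+1)\ge m)$ by $\oexpt{t}{m}$ for each gap $j\in\{1,\dots,k-1\}$, the lower tail of $\Delta_j$ being the upper tail of $\Delta_{k-j}$ in distribution.

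The guiding picture is a competition between the growth of a block of sites and the filling of the hole flanking it. Suppose the gap $X^k(j)-X^k(j+1)$ is large, so that the sites $\{1,\dots,j\}$ form a block sitting high above site $j+1$. While the block is detached, site $j$ never has a higher right neighbour, so $\{1,\dots,j\}$ evolves exactly as an autonomous $j$-site crystal with zero condition; by Lemma \ref{lemcouplage} it is dominated by the auxiliary process $\X^j$, whose top site grows at asymptotic rate $\d_j(\beta_1)\le\d_n(\beta_1)$. Meanwhile site $j+1$, lying below its left neighbour, has $V_{j+1}\ge 1$ and therefore grows at rate at least $\beta_2$ (it is either a hole, with rate $\beta_2$, or a down-step, with rate $\beta_1>\beta_2$). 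Since $\beta_2>\d_n(\beta_1)$ by hypothesis, the gap has strictly negative drift whenever the flanking block has size at most $n$, and hence cannot sustain a large value.

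The only gaps escaping this bound are those whose flanking block has size $n+1$, which can happen only at the two ends and only when $k=n+2$ (for instance the gap $j=n+1$ with left block $\{1,\dots,n+1\}$, or symmetrically $j=1$). Here the saving comes from the boundary: when such a gap is large, the lone site on the short side (site $n+2$, resp.\@ site $1$) sits next to the far-above block and next to a zero-boundary, so it has exactly one higher neighbour, $V=1$, and grows at rate $\beta_1$. The opposing block of size $n+1$ is dominated by $\X^{n+1}$, whose top grows at rate $\d_{n+1}(\beta_1)<\beta_1$ by Theorem \ref{AMS} applied to the auxiliary process. Thus these end gaps again have negative drift, now using the \emph{free} inequality $\beta_1>\d_{n+1}(\beta_1)$ rather than the hypothesis on $\beta_2$; this is precisely why two extra sites can be afforded, yielding $k\le n+2$.

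To turn these drift heuristics into the required uniform estimate I would fix $j$ and $t$, introduce the last time $s\le t$ at which $\Delta_j X^k$ dropped below $m/2$, and on that event run the domination of the detached flanking block by the appropriate auxiliary crystal together with the lower bound $\beta_2$ (resp.\@ $\beta_1$ at the ends) on the filling of the neighbouring site throughout $[s,t]$. The event $\{\Delta_j X^k_t\ge m\}$ then forces the top of an auxiliary crystal to outrun a rate-$\beta_2$ (resp.\@ rate-$\beta_1$) clock by at least $m/2$ over $[s,t]$, which has probability $\oexpt{t}{m}$ by the very definition of $\d_n(\beta_1)$ and the exponential estimate of Theorem \ref{AMS}. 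The hard part is exactly this coupling: one must show that while the gap stays above $m/2$ the flanking block genuinely couples with an autonomous zero- or infinite-condition crystal, so that its top is dominated by $\X$, and that the lower bound on the filling rate of the neighbouring site holds uniformly, all \emph{without} assuming the ergodicity one is trying to prove. Making this last-exit decomposition and the attached couplings (via Lemma \ref{mon} and Lemma \ref{lemcouplage}) work uniformly in $t$, and keeping track of the correct boundary condition for each block, is where the real work lies.
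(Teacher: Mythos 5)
Your strategy is the paper's own: reduce ergodicity to exponential tightness of each $(\Delta_j X^{n+2}_t)^+$ using the symmetry $\Delta_j X^k_t\overset{d}{=}-\Delta_{k-j}X^k_t$; while the gap at $j$ is open, treat $\{1,\dots,j\}$ as an autonomous crystal dominated (Lemma \ref{lemcouplage}) by the auxiliary process $\X^j$, whose top grows at rate $\d_j(\beta_1)\le\d_n(\beta_1)<\beta_2$, against the filling site $j+1$ which has $V_{j+1}\ge 1$ and hence rate at least $\beta_2$; and at the extreme gap $j=n+1$ trade $\beta_2$ for $\beta_1$ (the end site has $V=1$ exactly, by the zero boundary) and $\d_n(\beta_1)$ for the bound $\d_{n+1}(\beta_1)<\beta_1$ coming from Theorem \ref{AMS}. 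This is precisely the content of the paper's Lemma \ref{heredite} and its last assertion, and your explanation of why this buys exactly two extra sites is correct. Your last-exit decomposition is the paper's Lemma \ref{CS} (the paper cuts at the last zero of $\Delta_j$ rather than the last passage below $m/2$, and separately handles the case where the last exit is recent, so that the exponential decay in $t-s$ can be converted into decay in $m$; your sketch elides this point, but it is routine).

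The genuine gap sits exactly in the step you flag as ``the hard part,'' and it is not a mere technicality of coupling: the quantity $\d_j(\beta_1)$ controls $\X^j$ started from the \emph{flat} configuration $\ze$, whereas your argument produces $\X^j$ started from whatever configuration the block $\{1,\dots,j\}$ occupies at the last-exit time $s$. If that internal configuration is bad --- say site $j$ lies far below site $j-1$ --- then site $j$ has $V_j=1$ inside the detached block and grows at rate $\beta_1>\beta_2$ for as long as it takes to catch up, so the gap $\Delta_j$ \emph{can} outrun the rate-$\beta_2$ clock over a long stretch; the per-gap estimate as you state it is simply false without control of this initial defect. The paper's resolving idea, absent from your sketch, is an induction on the gap index $j$, from the boundary inward: the exponential tightness of $(\Delta_i X^{n+2})^+$ for $i<j$, already established at the previous steps, guarantees that at the decomposition time $\ell$ all gaps inside the block are at most $(t-\ell)/L$ outside an event of probability $\oexp{t-\ell}$; one then fixes $L$ with $d_j+j/L<\beta_2$, and attractivity (Lemma \ref{mon}) bounds the increment of site $j$ by the flat-start increment plus the defect $(j-1)(t-\ell)/L$, which the slack $\beta_2-d_j$ absorbs. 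You name the right tools (Lemmas \ref{mon} and \ref{lemcouplage}) but treat the gaps independently; without feeding the tightness of the inner gaps into the estimate for the next one, the argument does not close.
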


\begin{Cor}\label{n4}
If $\beta_1,\beta_2>\beta_0$ then $H^3$ is ergodic.
\end{Cor}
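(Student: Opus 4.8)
The plan is to derive Corollary \ref{n4} directly from Theorem \ref{recurrence0} by checking that its hypotheses are met for the relevant parameter ranges. The corollary asks for ergodicity of $H^3$ under the single assumption $\beta_1,\beta_2>\beta_0$, and Theorem \ref{recurrence0} gives ergodicity of $H^k$ for all $k\leq n+2$ as soon as $\beta_1>\beta_0$ and $\beta_2>\d_n(\beta_1)$. Taking $n=1$ would give $k\leq 3$, which is exactly what we want, so the heart of the matter is to understand $\d_1(\beta_1)$ and show the threshold condition $\beta_2>\d_1(\beta_1)$ is automatically satisfied. The first thing I would do is identify the degenerate object $\X^1$: this is the auxiliary crystal process with a single site, so $H^{1,\,}$ lives on $\ZZ^{0}=\{\ze\}$ and the single pile $\X^1_t(1)$ simply grows at the constant rate $\tilde\beta_0=\beta_0$ (with $V_1\equiv 0$ when there are no neighbours). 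Hence $\X^1_t(1)$ is a Poisson process of rate $\beta_0$, and a Chernoff/large-deviation estimate gives $\PP_\ze(\X^1_t(1)\geq dt)=\oexp{t}$ precisely for $d>\beta_0$; this forces $\d_1(\beta_1)=\beta_0$.

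With $\d_1(\beta_1)=\beta_0$ in hand, the hypothesis $\beta_2>\d_1(\beta_1)$ reduces exactly to $\beta_2>\beta_0$, which is part of the corollary's assumption. Combined with $\beta_1>\beta_0$, this is precisely what Theorem \ref{recurrence0} (applied with $n=1$) requires in order to conclude ergodicity of $H^k$ for $k\leq 3$, and in particular for $k=3$. So the proof is essentially a one-line invocation once the boundary case $n=1$ is interpreted correctly. I would state the computation of $\d_1(\beta_1)=\beta_0$ as the only substantive step, perhaps noting that $\d_n(\beta_1)\in[\beta_0,\beta_1]$ from the remark following \eqref{dtildencroissant} already pins it at the lower endpoint $\beta_0$ since the one-site process has no faster-growing neighbour configurations available.

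The one point that needs care is whether Theorem \ref{recurrence0} is genuinely intended to apply at $n=1$, since the surrounding discussion consistently assumes $n\geq 2$. The definition of $\d_n(\beta_1)$ and the monotonicity statement \eqref{dtildencroissant} are phrased for $n\geq 2$, so I would either check that the argument proving Theorem \ref{recurrence0} degrades gracefully to $n=1$, or alternatively avoid the edge case entirely by a different route: first establish ergodicity of $H^2$ under $\beta_1>\beta_0$ (which is immediate from Proposition \ref{2sites}), use it to control $\d_2(\beta_1)$, and then feed a bound into Theorem \ref{recurrence0} with $n$ chosen appropriately. The cleanest phrasing, and the one I would attempt first, is still the $n=1$ interpretation, because $\d_1(\beta_1)=\beta_0$ is so transparent; the main obstacle is therefore not a hard estimate but the bookkeeping of confirming that the comparison machinery and the threshold theorem remain valid in this smallest nontrivial instance.
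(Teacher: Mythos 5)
Your primary argument is correct and is precisely the paper's intended (and only) proof: Corollary \ref{n4} is Theorem \ref{recurrence0} applied with $n=1$, and your worry about this edge case is settled by the paper itself, whose proof of Theorem \ref{recurrence0} starts its induction by noting that $\X^1_t$ is a simple Poisson process of intensity $\beta_0$ --- which is exactly your identification $\d_1(\beta_1)=\beta_0$. One caution: your fallback route via $n=2$ would not work, since ergodicity of $H^2$ does not force $\d_2(\beta_1)$ down to $\beta_0$ (indeed $\d_2(\beta_1)\geq 2\beta_0\beta_1/(\beta_0+\beta_1)>\beta_0$, so that route would require the strictly stronger hypothesis $\beta_2>\d_2(\beta_1)$); but since the $n=1$ application is legitimate, this is immaterial.
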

In section \ref{preuverecurrence} we shall prove Theorem \ref{recurrence0}, and
Theorem \ref{recurrence1} below, which is an application of Theorem
\ref{recurrence0}.

\begin{Th}\label{recurrence1}
Let $n\geq 2$ and $\beta\in\D$. The process $H^k$ is ergodic for $2\leq k\leq
n+2$ if $\beta$ satisfies one of the following conditions:
\begin{itemize}
\item[(a)]$\beta_2>n\beta_0$,
\item[(b)]$\beta_2>((n-1)\beta_1+\beta_0)/n$.
\end{itemize}
Moreover $H^k$ is ergodic for any $k\geq 2$ if $\beta$ satisfies:
\begin{itemize}
\item[(c)]$\beta_2>4\sqrt2\sqrt{\beta_1\beta_0}$. 
\end{itemize}
\end{Th}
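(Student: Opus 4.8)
The plan is to derive Theorem~\ref{recurrence1} as a direct consequence of Theorem~\ref{recurrence0}, whose hypothesis is $\beta_2>\a_n(\beta_1)$. Since $\a_n(\beta_1)\in[\beta_0,\beta_1]$ and $\X^n$ is the process with parameters $(\beta_0,\beta_1,\beta_1)$, the entire task reduces to producing explicit upper bounds on the growth rate $\a_n(\beta_1)$ under each of the three conditions. For parts (a), (b) and (c), it then suffices to show that the stated lower bound on $\beta_2$ exceeds $\a_n(\beta_1)$; ergodicity for $H^k$, $2\le k\le n+2$, follows immediately from Theorem~\ref{recurrence0}.

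First I would attack parts (a) and (b). The idea is to bound the growth speed of $\X^n_t(n)$ crudely. For (a), one observes that in $\X^n$ all rates lie in $\{\beta_0,\beta_1\}$, and a very coarse bound gives $\a_n(\beta_1)\le\beta_0+(\text{something})/n$; more precisely I would look for a Lyapunov/combinatorial argument showing that the rightmost column cannot on average grow faster than the arithmetic combination appearing in (b), namely $\a_n(\beta_1)\le((n-1)\beta_1+\beta_0)/n$. Since $\beta_0\le\a_n(\beta_1)\le\beta_1$, this weighted average bound is natural: site $n$ grows at rate $\beta_1$ only when it is a local hole and at rate $\beta_0$ otherwise, and across the $n$ sites one expects at least one "peak" contributing the slow rate $\beta_0$. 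Condition (a), $\beta_2>n\beta_0$, would follow from an even cruder estimate (or by observing $((n-1)\beta_1+\beta_0)/n$ is dominated when $\beta_1$ is itself controlled), so (a) and (b) are the two faces of the same elementary bound on the average deposition rate at the fastest site.

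The more substantial part is (c), which must hold for \emph{all} $k\ge2$ and hence requires a bound on $\a_n(\beta_1)$ that does not degrade with $n$. Here the monotonicity \eqref{dtildencroissant} cuts the wrong way — $\a_n$ increases in $n$ — so I would instead bound $\a_\infty(\beta_1):=\lim_n\a_n(\beta_1)$ uniformly. The natural approach is to relate $\a_\infty(\beta_1)$ to the speed of the corresponding process on $\ZZ$ (or a half-line) and to exploit the exact two-site computation: recall $v^{2,\infty}=2\beta_1\beta_2/(\beta_1+\beta_2)$ and, in the tilde world, $v^2=2\beta_0\beta_1/(\beta_0+\beta_1)$. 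I expect the key inequality to take the form $\a_\infty(\beta_1)\le C\sqrt{\beta_0\beta_1}$ for an explicit constant, obtained by a comparison argument (via Lemma~\ref{lemcouplage}) with a simpler exactly-solvable model whose invariant measure has geometric tails with ratio $\sqrt{\beta_0/\beta_1}$. Matching constants should yield the factor $4\sqrt2$.

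The main obstacle will be establishing the $n$-uniform bound underlying (c): proving $\a_\infty(\beta_1)=\mathcal{O}(\sqrt{\beta_0\beta_1})$ requires controlling the full infinite-volume shape process rather than a single pair of sites, and the crude averaging used for (a) and (b) is useless here because it produces an $n$-dependent bound. I anticipate that the right tool is a super-/sub-martingale or a carefully chosen Lyapunov function on the shape space whose drift is governed by the geometric-mean scale $\sqrt{\beta_0\beta_1}$, combined with exponential-tightness estimates in the spirit of Theorem~\ref{AMS}; extracting the precise constant $4\sqrt2$ from such an argument is where the real work lies.
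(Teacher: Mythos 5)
Your overall reduction---bound $\d_n(\beta_1)$ from above and invoke Theorem \ref{recurrence0}---is exactly the paper's strategy, so the skeleton is right; but the estimates themselves are either misstated or missing. For (a), the bound you need is $\d_n(\beta_1)\leq n\beta_0$, not $\beta_0+O(1/n)$ (which is false: as $\beta_1\to\infty$ the tilde process's speed really approaches $n\beta_0$). The paper gets it by noting that $\max_i\X^n_t(i)$ can only increase through a jump at a site of maximal height, which has $V_j=0$, so the maximum is dominated by a Poisson process of intensity $n\beta_0$. For (b), your observation that every configuration has at least one site with $V_j=0$ is indeed the key: it shows that $\Sigma\X^n_t:=\sum_i\X^n_t(i)$ is dominated by a Poisson process of intensity $(n-1)\beta_1+\beta_0$. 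But a bound on the total growth does not by itself bound the growth of site $n$; a priori the process could concentrate its growth on that site. The paper closes this hole with Theorem \ref{AMS}: since $\tilde\beta_0<\tilde\beta_1=\tilde\beta_2$, the differences $\Delta_i\X^n_t$ are exponentially tight, hence $\PP_\ze\left(\min_{i}\Delta_i\X^n_t\leq-\eta t\right)=\oexp{t}$, and on the complementary event $\X^n_t(n)$ being abnormally high forces $\Sigma\X^n_t$ to be abnormally high. This step is not optional, and your "Lyapunov/combinatorial argument" does not supply it. Note also that (a) and (b) are incomparable bounds (neither implies the other: (a) wins for large $\beta_1$, (b) for $\beta_1$ close to $\beta_0$), so your suggestion that one follows from the other fails.

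The genuine gap is (c), and you say so yourself ("where the real work lies"). You correctly identify that an $n$-uniform bound on $\d_n(\beta_1)$ is required, but you offer only candidate tools, and both are problematic. A coupling (Lemma \ref{lemcouplage}) with an "exactly-solvable" model cannot work as described: the only exactly solvable case in this family is the Gates--Westcott dynamically reversible one, which requires $\beta_1=(\beta_0+\beta_2)/2$, and for the tilde parameters $(\beta_0,\beta_1,\beta_1)$ this forces $\beta_1=\beta_0$, i.e.\@ a trivial model. A Foster/Lyapunov argument would give ergodicity of a shape process, but $\d_n$ is defined through the large-deviation-type estimate $\PP_\ze(\X^n_t(n)\geq dt)=\oexp{t}$, which a Foster criterion does not yield, let alone uniformly in $n$. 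What the paper actually does is a time-renormalized induction of percolation flavour: assuming WLOG $\beta_1\geq 2\beta_0$, set $t_0=1/(4\sqrt2\sqrt{\beta_1\beta_0})$ and prove by induction on $k+j$ that $\PP_\ze\left(\X^n_{kt_0}(i)\geq k+j-1\right)\leq(1/2)^j$, by decomposing the event according to the \emph{seed}---the site $u$ and time window $[(m-1)t_0,mt_0]$ in which level $k+j-1$ is first created by a deposition at a local maximum, an event of rate $\beta_0$---followed by an \emph{extension}, a chain of $|u-i|$ successive $\beta_1$-jumps propagating that level to site $i$. The sum over seed positions runs over all of $\ZZ$ and converges by the Poisson tail, which is precisely what makes the resulting bound $\d_n(\beta_1)\leq 4\sqrt2\sqrt{\beta_0\beta_1}$ independent of $n$; the constant $4\sqrt2$ comes out of the elementary inequality $\beta_0t_0(2+8\beta_1t_0)\leq 1/2$. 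This mechanism, not a Lyapunov function, is the missing core of your proposal; without it your argument only reproduces $n$-dependent conditions of the type already known.
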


Finally in Section \ref{transience} we establish that $H^n$ is transient for
some parameters in $\D$. More precisely we show

\begin{Th}\label{transience}Let $\beta_0>0$ and $\beta_2\in]\beta_0,2\beta_0[$.
Then there exists $B>\beta_0$ such that $H^n$ is transient for any $\beta_1> B$
and $n\geq 5$.
\end{Th}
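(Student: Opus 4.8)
The plan is to realise, for each $n\ge 5$, a local configuration from which the shape process escapes to infinity with positive probability, and then to promote this to genuine transience through irreducibility of $H^n$ on $\ZZ^{n-1}$ (the moves $f_1,\dots,f_n$ generate the lattice and all carry positive rate, so transience is a class property and it suffices to exhibit one state from which $H^n$ does not return to $\ze$). The driving mechanism is Proposition \ref{2sites}: two adjacent sites whose two \emph{outer} neighbours remain strictly lower evolve exactly as a zero-condition two-site crystal, so their common height grows at speed $v^2=2\beta_0\beta_1/(\beta_0+\beta_1)$. Since $v^2$ increases in $\beta_1$ with $v^2\nearrow 2\beta_0$ as $\beta_1\to\infty$, the hypothesis $\beta_2<2\beta_0$ lets us set $B=\beta_0\beta_2/(2\beta_0-\beta_2)$; one checks $B>\beta_0$ precisely because $\beta_2>\beta_0$, and $\beta_1>B$ is equivalent to $v^2>\beta_2$. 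Thus, for $\beta_1>B$, a width-two block outruns an isolated hole, whose height can increase only at rate $\beta_2$.

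I would treat the case $n=5$ first, as the heart of the matter. Start from a configuration $x$ in which sites $\{1,2\}$ and $\{4,5\}$ are two equal-height blocks sitting far above site $3$, which is a deep hole at height $0$. Let $\tau$ be the first time $x(3)$ reaches the level of $x(2)$ or of $x(4)$. Before $\tau$ the Poisson construction decouples completely: since $x(0)=x(6)=0$ and $x(3)$ lies below both blocks, one has $V_1=\ind_{\{x(2)>x(1)\}}$, $V_2=\ind_{\{x(1)>x(2)\}}$, so $(x(1),x(2))$ evolves as an autonomous zero-condition two-site process; likewise $(x(4),x(5))$; and $V_3\equiv 2$, so $t\mapsto x(3)$ is a rate-$\beta_2$ Poisson process. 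The only coupling to the rest of the dynamics enters through the \emph{signs} of the height differences, and these are frozen before $\tau$.

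On the event $\{\tau=\infty\}$ this decoupling persists for all time, so by the growth-rate proposition of Section~1 one has $x(2)_t/t\to v^2$ and $x(3)_t/t\to\beta_2$ \as, whence $\Delta_2 X^n_t=x(2)_t-x(3)_t\to+\infty$ and $H^n$ leaves every finite set forever. It remains to show $\PP_x(\tau=\infty)>0$. Coupling the real process to the autonomous ones through the same Poisson clocks, each gap $x(2)_t-x(3)_t$ and $x(4)_t-x(3)_t$ agrees up to $\tau$ with $\hat D_t$, the difference of an autonomous block-site (speed $v^2$) and an independent rate-$\beta_2$ Poisson process. Because $\hat D_t/t\to v^2-\beta_2>0$ \as, we have $\hat D_t\to+\infty$, so $\inf_{t\ge 0}\hat D_t>-\infty$ \as\ and $\PP(\inf_t\hat D_t\ge 1)\to 1$ as the initial gap tends to infinity. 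Choosing the initial hole deep enough keeps both gaps positive with probability bounded away from $0$, giving $\PP_x(\tau=\infty)>0$ and hence transience of $H^5$.

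For general $n\ge 5$ I would tile the sites into width-two blocks separated by single holes, beginning and ending with a block so that the boundary condition $x(0)=x(n+1)=0$ insulates the two extreme blocks; the same decoupling then makes every interior hole a rate-$\beta_2$ Poisson process flanked by blocks of speed $v^2>\beta_2$, and the $n=5$ escape estimate applies verbatim to one interior gap. I expect two points to be the main obstacles. First, monotone coupling is \emph{unavailable} here: Lemma \ref{mon} requires $\beta_0\le\beta_1\le\beta_2$, which fails throughout $\D$, so the decoupling cannot be obtained from attractiveness and must be read off directly from the Poisson construction as above. Second, a perfect block–hole tiling with block ends exists only when $n\equiv 2\pmod 3$; for the other residues one end block must be widened to width $3$ or $4$, and one then needs that such a zero-condition block ($H^m$, $m\in\{3,4\}$, known to be ergodic for $\beta_1,\beta_2>\beta_0$ by Corollary \ref{n4}) still has growth speed exceeding $\beta_2$ once $\beta_1>B$ is enlarged if necessary. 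Establishing this last speed comparison is the delicate residual step; the clean two-block gadget for $n=5$ captures the essential phenomenon.
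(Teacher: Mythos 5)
Your gadget for $n\equiv 2\pmod 3$ is sound, and it is in fact the paper's own argument: the paper takes the same threshold (your $B=\beta_0\beta_2/(2\beta_0-\beta_2)$ appears as the first term of a maximum), starts from $x=(1,1,0,1,1,0,\dots,1,1)$, reads the decoupling directly off the Poisson construction exactly as you do (attractiveness is indeed never invoked here, as it is unavailable on $\D$), and obtains $\PP_x(\forall t\geq 0,\ H^n_t\neq\ze)>0$ from the elementary fact (\ref{rappelpoisson}) that a rate-$\beta_2$ Poisson process stays below a path growing at rate $v^2>\beta_2$ with positive probability; your variant (deep initial hole plus almost sure finiteness of $\inf_t \hat D_t$) is an equivalent elementary substitute, and the reduction to transience via irreducibility is the same. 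One small imprecision: for general $n$ the decoupling survives only as long as \emph{every} hole stays strictly below both flanking block sites, so the positive-probability estimate must be made simultaneously over all gaps (by union bound or by conditioning on the block trajectories, as the paper does with its process $m_t$), not just over "one interior gap"; this is exactly what you did for $n=5$, so it is easily repaired.

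The genuine gap is the one you flagged yourself, and it is not a residual detail: for $n\equiv 0,1 \pmod 3$ no tiling by width-2 blocks and unit holes exists, some block must have width 3 (width 4 is never needed: blocks of sizes 2 and 3 with $k-1$ unit holes realize every $n\geq 5$), and you then need $v^3>\beta_2$ for all sufficiently large $\beta_1$. This is precisely the content of the paper's Lemma \ref{vitesse}, the technically hardest step of its Section 5: under (\ref{hyplem}), i.e. $\beta_1\geq 27\beta_0^2\beta_2/(\eps(\beta_2-\beta_0))$, one has $v^3\geq 3\beta_0-\eps$; taking $\eps$ slightly below $3\beta_0-\beta_2$ (legitimate since $\beta_2<2\beta_0<3\beta_0$) yields $v^3>\beta_2$, and $B$ becomes the maximum of your expression and $27\beta_0^2\beta_2/\bigl((3\beta_0-\beta_2)(\beta_2-\beta_0)\bigr)$. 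No soft argument seems to substitute for this: you cannot deduce $v^3\geq v^2$ from Lemma \ref{lemcouplage}, because every coupling/monotonicity statement in the paper requires $\beta_0\leq\beta_1\leq\beta_2$, which fails throughout $\D$, and it is not even clear a priori that $v^3$ is monotone in $\beta_1$. The paper proves the bound by hand: it tracks the excursions of $X^3$ into and out of the hole set $A=\{h:h(1)>0,\ h(2)<0\}$, shows that $\Sigma X^3_{T_n}-(9\beta_0-3\eps)T_n$ is a submartingale using three estimates (the number of jumps before entering $A$ dominates a geometric variable of parameter $\beta_0/\beta_1$, since each jump enters $A$ with probability at most $\beta_0/\beta_1$; a bound on the expected duration of those jumps; and a birth-and-death comparison giving $\EE_x[T_1-U_1]\leq 1/(\beta_2-\beta_0)$), and concludes with a law-of-large-numbers argument along the successive returns of the shape to $\ze$. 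Without this lemma, or some replacement for it, your proof establishes the theorem only for $n\in\{5,8,11,\dots\}$, whereas the statement claims transience for every $n\geq 5$.
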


Before turning to the proofs we briefly comment the interest of the above
assertions, with the following diagram in mind. In Theorem \ref{recurrence1},
condition (a) improves the only sufficient condition for ergodicity in $\D$
established so far, namely (\ref{ncarre}). Condition (b) provides for fixed $n$
a right-side neigbourhood of the set $\{\beta:\beta_0<\beta_1\leq\beta_2\}$ in
which ergodicity still holds. Condition (c) is certainly the most important one
since it yields a zone of ergodicity that does not depend on the number of
sites, and Theorem \ref{transience} does the same for transience.

\begin{figure}[h]
\begin{center}
\includegraphics[scale=0.4]{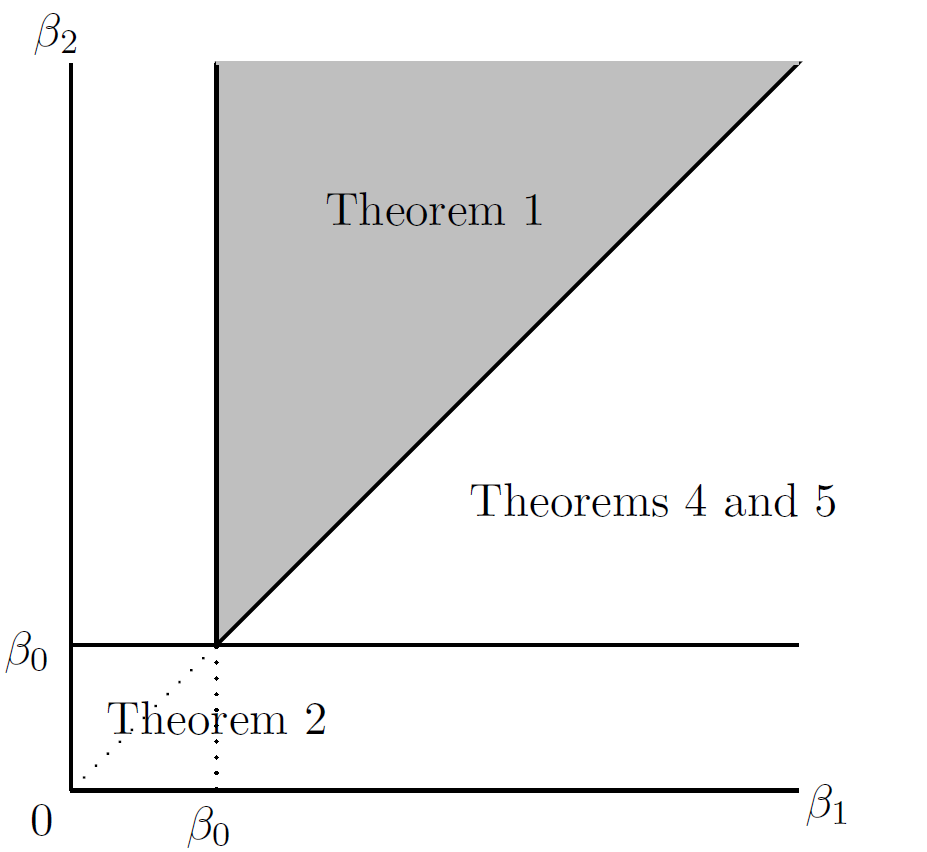}
\end{center}
\end{figure}  

\newpage
 \section{Proof of Theorem \ref{peigne}} \label{preuvepeigne}
The proof of Theorem \ref{peigne} uses the following technical result. 
\begin{Lem}
\label{lemmemarkov}
Let $(Z_t,t\geq 0)$ be a Markov process on some countable set $E$, and
$A\subset E$. For any $F\subset E$ we define $T_F=\inf\{t\geq 0:Z_t\in F\}$. We
assume that there exists $p>0,~N\in\NN$ and some subsets $B_1,\dots ,B_N$ and
$C_1,\dots ,C_N$ of $E$ such that
\begin{itemize} 
\item[\emph{(a)}] for any $x\notin (A\cup B)$, where $B=\cup_{i=1}^N B_i$, we
have $$\PP_x(T_B<+\infty)=1,$$ 
\item[\emph{(b)}] for any $i\in\{1,\dots,n\}$, and $x\in B_i\backslash A$,
$$\PP_x(T_{C_i\cup A}<+\infty)=1,$$ and $$\PP_x(Y_{T_{C_i\cup A}}\in A)\geq p.$$
\end{itemize}
Then for any $x\in A^c$, we have $\PP_x(T_A<+\infty)=1$.
\end{Lem}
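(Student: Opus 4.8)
The plan is to realize the target set $A$ as the goal of a sequence of \emph{attempts}, each succeeding with probability at least $p$ conditionally on the past, and to use the strong Markov property to guarantee that infinitely many attempts are performed unless $A$ is hit. A geometric (Borel--Cantelli type) estimate then forces success almost surely.

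First I would fix $x\in A^c$ and construct an increasing sequence of stopping times $(\sigma_k)_{k\geq 0}$, working on the event $\{T_A=+\infty\}$ where the process never enters $A$, so that $Z_{\sigma_k}\notin A$ at every stage and clauses (a),(b) always apply. Set $\sigma_0=0$. Given $\sigma_k$ with $Z_{\sigma_k}\notin A$, I first drive the process into $B$: by (a) and the strong Markov property, $\sigma_k':=\sigma_k+T_B\circ\theta_{\sigma_k}$ is almost surely finite (and equals $\sigma_k$ if $Z_{\sigma_k}\in B$ already), and $Z_{\sigma_k'}\in B$. Since $B=\cup_{i=1}^N B_i$ with $N$ finite, let $i(k)$ be the least index with $Z_{\sigma_k'}\in B_{i(k)}$; this is $\F_{\sigma_k'}$-measurable. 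I then perform one attempt by waiting for the process to reach $C_{i(k)}\cup A$: by (b), $\sigma_{k+1}:=\sigma_k'+T_{C_{i(k)}\cup A}\circ\theta_{\sigma_k'}$ is almost surely finite, and $Z_{\sigma_{k+1}}$ lies either in $A$ (success, and the construction halts) or in $C_{i(k)}\setminus A$ (failure, and I iterate).

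The crucial estimate comes from the second half of (b). Conditionally on $\F_{\sigma_k'}$, with $Z_{\sigma_k'}\in B_{i(k)}\setminus A$, the strong Markov property gives
$$\PP_x\big(Z_{\sigma_{k+1}}\in A\mid \F_{\sigma_k'}\big)=\PP_{Z_{\sigma_k'}}\big(Z_{T_{C_{i(k)}\cup A}}\in A\big)\geq p.$$
Writing $G_m$ for the event that the first $m$ attempts are all well defined and all fail to land in $A$, the tower property applied $m$ times yields $\PP_x(G_m)\leq(1-p)^m$, whence $\PP_x\big(\cap_{m}G_m\big)=0$.

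Finally I would verify that $\{T_A=+\infty\}\subset\cap_m G_m$ up to a null set. By induction on $k$: on $\{T_A=+\infty\}$ one always has $Z_{\sigma_k}\notin A$, so (a) and (b) (each invoked through the strong Markov property) make $\sigma_k'<\infty$ and $\sigma_{k+1}<\infty$ almost surely, and no attempt can succeed; a countable intersection of almost sure events shows all $\sigma_k$ are finite and all attempts fail. Combining with the geometric bound gives $\PP_x(T_A=+\infty)=0$, which is the claim. The main obstacle is the bookkeeping rather than any deep idea: making the milestone times $\sigma_k'$ and $\sigma_{k+1}$ genuine stopping times with the correct measurability (handled by the finiteness of $N$ and the least-index convention) and, above all, ruling out premature termination, i.e. ensuring that on $\{T_A=+\infty\}$ infinitely many attempts really occur. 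This last point rests entirely on the almost-sure finiteness of the hitting times supplied by (a) and (b).
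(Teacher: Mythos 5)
Your proposal is correct and takes essentially the same route as the paper's own proof: an alternating sequence of stopping times (drive the process into $B$, then attempt to reach $A$ by waiting for $C_i\cup A$), with your least-index convention playing exactly the role of the paper's partition $B'_1,\dots,B'_N$ of $B$, and the geometric bound $(1-p)^m$ obtained from the strong Markov property before passing to the limit. The only difference is bookkeeping: the paper handles termination by adjoining a cemetery state $\partial$ and setting later stopping times to $\infty$, whereas you halt the construction upon success and work on the event $\{T_A=+\infty\}$.
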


\begin{proof}
We consider the partition $(B'_1,\dots B'_N)$ of the set $B$ given by
$B'_1=B_1$ and $$B'_k=B_k\backslash\left(\cup_{i=1}^{k-1}B_i\right),\quad 2\leq
k\leq N.$$ We start off by defining inductively an increasing sequence of
stopping times. For the well-definedness of this sequence we add an element
$\partial$ to the set $E$ and use the conventions $\inf\varnothing=\infty$ and
$Z_\infty=\partial$.
\\ Let $x\notin A$. We define
$$\tau_1:=\inf\{t\geq 0: Z_t\in B\},$$ 
and let $i_1$ be such that $Z_{\tau_1}\in B'_{i_1},$ and
$$\tau'_1:=\begin{cases}\inf\{t\geq \tau_1: Z_t\in C_{i_1}\cup A\},& \text{if }
Z_{\tau_1}\notin A,\\ \infty, &\text{ otherwise}.\end{cases}$$
For $n\geq 2$, we define
$$\tau_n:=\begin{cases}\inf\{t\geq \tau'_{n-1}: Z_t\in B\},   &    \text{if }
Z_{\tau'_{n-1}}\notin A\cup\{\partial\},\\ \infty,     &     
\text{otherwise},\end{cases},$$
and let $i_n$ be such that $Z_{\tau_n}\in B'_{i_n}$ if $\tau_n<\infty$, and
$$\tau'_n:=\begin{cases}\inf\{t\geq \tau_n: Z_t\in C_{i_n}\cup A\},    &   
\text{if } Z_{\tau_{n}}\notin A\cup\{\partial\},\\ \infty, & \text{otherwise}.
\end{cases}$$
In this construction the sequence
$(Z_{\tau_1},Z_{\tau'_1},\dots,Z_{\tau_n},Z_{\tau'_n},\dots)$ is such that
$Z_{\tau_1}\notin A,~Z_{\tau'_1}\notin A,~Z_{\tau_2}\notin A\dots$ until one of
its terms belongs to $A$, and all the following terms are equal to $\partial$.
Proceeding by induction, the strong Markov property and assumptions (a) and (b)
easily yield $$\forall n\geq 1,~ \PP_x( Z_{\tau_1}\notin A, Z_{\tau'_1}\notin
A,\dots Z _{\tau_n}\notin A,Z_{\tau'_n}\notin A)\leq (1-p)^n.$$
Letting $n$ go to infinity in this inequality, we get $\PP_x( \forall t\geq
0,Z_t\notin A)=0$.
\end{proof}

\begin{proof}[Proof of Theorem \ref{peigne} ] For any configuration $h$ and
$i<j$, we denote by $\Delta_{i,j}(h)=h(i)+\dots+h(j-1)$ the height difference
between sites $i$ and $j$. We consider the following subsets of $\ZZ^{n-1}$. To
simplify notations, inside braces we denote by $x$ any configuration whose shape
is $h$:

\begin{itemize}
\item[-]$B_i=\{h\in\ZZ^{n-1}:h(i)=0\},~1\leq i\leq n-1,$
\item[-]$A_i=\{h\in\ZZ^{n-1}:h(i-1)\geq 0,h(i)\leq 0\},~2\leq i\leq n-1,$
\item[-]$A=\cup_{i=2}^{n-1}A_i,$
\item[-]$C_1=\{h\in\ZZ^{n-1}:\min(x(1),x(2))=x(3)\},$
\item[-]$C_i=\{h\in\ZZ^{n-1}:\min(x(i),x(i+1))=\max(x(i-1),x(i+2))\},1\leq i\leq n-2,$
\item[-]$C_{n-1}=\{h\in\ZZ^{n-1}:\min(x(n-1),x(n))=x(n-2)\}.$
\end{itemize}

$C_i$ is the set of configurations in which the lower site of the block
$\{i,i+1\}$ is at the same level as the higher site among the sites neighboring
this block (there are two such sites unless $i=1$ or $i=n-1$). A little moment
of thought will convince the reader of the following fact: in any configuration
$h\in(A\cup B)^c$, there must be a unique site with maximal height. This will be
used several times in this Section.\\Before turning to the proof, we introduce
further notations. If $1\leq a\leq b\leq n$ and $x_0\in\NN^{b-a+1}$ we denote by
\begin{equation}(X^{a:b,x_0,t_0}_t,t\geq 0)\label{defxab}\end{equation} the
crystal process with $b-a+1$ sites starting from $x_0$ and defined in the same
way as in (\ref{poisson}) but using the Poisson processes
$N_{k,j}(t_0+\cdot),~a\leq j\leq b,~0\leq k\leq 2.$ When $t_0=0$ this
superscript will be dropped. Moreover for any vector $x\in\RR^n$, we let
$x(a:b):=(x(a),x(a+1),\dots,x(b))$. 
\\We begin with the proof of (i), proceeding by induction on $n$. The case $n=1$
is straightforward and for $n=2$ the result is a consequence of Proposition
\ref{2sites}. We now take $n\geq 3$ and assume that (i) holds for any $k<n$. For
$Y\subset\ZZ^{n-1}$ we define $T_Y=\inf\{t\geq 0: H^n_t\in Y\}$, and we also use
the following notations:
$$E_Y(t)=\{\forall s\geq t,H^n_s\in Y\},$$
and
$$F_Y=\cup_{t\geq 0}E_Y(t).$$ 
Let $i\in\{2,\dots,n-1\}$. On the event $E_{A_i}(t)$, after time $t$ the value
of $X^n_t(i)$ is increased by one unit at the jump times of $N_{2,i}$, and only
at these times. Indeed, for $s\leq t$, if both $H^n_s(i-1)>0$ and $H^n_s(i)<0$
then site $V_i(H^n_s)=2$, and if one of them is $0$ then any jump of site $i$ is
forbidden by the event $E_{A_i}(t)$. Consequently we have
$E_{A_i}(t)\subset\{\forall s\geq t, X^n_s(i)= X^n_t(i)+N_{2,i}(s)-N_{2,i}(t)\}$
so for any $x\in\ZZ^n$,
\begin{equation}\PP_x\text{-ps, }F_{A_i}\subset\{\lim
t^{-1}X^n_t(i)=\beta_2\}.\label{sitei}\end{equation}
We now use the fact that as long as  $H^n_t\in A_i$, the vectors $X^n_t(1:i-1)$
and $X^n_t(i+1:n)$ evolve like two independent crystal processes with $i-1$
(resp.\@ $n-i$) sites. Namely on the event $E_{A_i}(t_0)\cap\{X^n_{t_0}=x_0\}$,
we have
\begin{itemize}
\item[-] $X^n_{t_0+t}(1:i-1)=X_t^{1:i-1,x_0^\ell,t_0}$, where $x_0^\ell=x_0(1:i-1)$, and
\item[-] $X^n_{t_0+t}(i+1:n)=X_t^{i+1:n,x_0^r,t_0}$, where $x_0^r=x_0(i+1:n)$.
\end{itemize}
Thus the inductive hypothesis ensures that $\PP_x$-\as,
\begin{align}F_{A_i}\subset\big\{ t^{-1}X^n_{t}(1:i-1)\text{ and }
t^{-1}X^n_{t}(i+1:n)&\text{ both converge} \nonumber\\ \text{and their limits
have the form (\ref{A1})}\big\}.&\label{sitejusquei}\end{align}
Thanks to (\ref{sitei}) and (\ref{sitejusquei}) it is sufficent to show that 
\begin{equation}\PP_h(\cup_{i=2}^{n-1}F_{A_i})=1 \label{amontrer}\end{equation} 
to achieve the proof. We first prove the existence of a constant $r>0$ such
that for any $h\in A_i$, 
\begin{equation}\PP_h(E_{A_i}(0))\geq r.\label{troustable}\end{equation}
On one hand, starting from $h\in A_i$ two transitions suffice to make site $i$
strictly lower than its two neighbours, so there exists $r_1>0$ such that for
any $h\in A_i$, 
\begin{equation}\PP_h\big(\forall t\leq 1,~H^n_t\in A_i;~H^n_1(i-1)>0\text{ and
}H^n_1(i)<0\big)\geq r_1.\label{troustable1}\end{equation} 
On the other hand if $h'$ is such that $h'(i-1)>0$ and $h'(i)<0$, we have the
$\PP_{h'}$-a.\@s.\@ inclusion $\{\forall t\geq 0,~N_{2,i}(t)\leq
\min(N_{0,i-1}(t),N_{0,i+1}(t))\}\subset E_{A_i}(0)$. Since $\beta_2<\beta_0$
basic considerations about Poisson processes give the existence of $r_2>0$ such
that for any $h'$ as above,
\begin{equation}\PP_{h'}(E_{A_i}(0))\geq r_2.\label{troustable2}\end{equation} 
Hence (\ref{troustable}) with $r=r_1r_2$ follows from (\ref{troustable1}) and
(\ref{troustable2}). Finally (\ref{amontrer}) will follow form
(\ref{troustable}), the strong Markov property  and the fact that for any
$h\notin A$, \begin{equation}\PP_h(T_A<+\infty)=1,\label{hit}\end{equation}
which now remains to be shown. To show (\ref{hit}) we shall check that
assumptions (a) and (b) of Lemma \ref{lemmemarkov} are fulfilled.
\\For (a) we take $h\notin(A\cup B)$. Let $i$ be the unique site with maximal
height in configuration $h$. If $i>2$ then $h(i-2),h(i-1)<0$ (and if $i=2$ it is
still the case by convention), so that on the event $\{T_B>t\}$, we have
$H^n_t(i-1)=h(i-1)+N_{1,i-1}(t)-N_{0,i}(t),~\PP_h$-a.\@s. Thus
$\PP_h(T_B=+\infty)\leq\PP_h(\forall t\geq
0,h(i-1)+N_{1,i-1}(t)-N_{0,i}(t)<0)=0$. By the symmetry of the process, the case
$i=1$ may be treated as the case $i=n$.
\\We now turn to (b) so we take an initial condition $h\in B_i\backslash A$. It
is easy to see that there is some $i\in\{2,\dots n-1\}$ such that 
\begin{equation}h(1)<0,\dots,h(i-1)<0,h(i)=0,h(i+1)>0,\dots,h(n-1)>0.\label{
forme}\end{equation}
Note that on the event $\{T_{C_i\cup A}>t\}$ all strict inequalities in
(\ref{forme}) have to be preserved up to time $t$, hence $\{T_{C_i\cup
A}>t\}\subset\{\forall s\in[0,t],V_{i-1}(H^n_s)=1\}$. Consequently we have, for
any configuration $x$ whose shape is $h$: 
$$\PP_x\text{-\as},~\{ T_{C_i\cup A}>t \}\subset\{
X^n_t(i-1)=x(i-1)+N_{1,i-1}(t)\}.$$
From a similar argument we also get:
$$\PP_x\text{-\as},~\{ T_{C_i\cup A}>t \}\subset\{
\big(X^n_t(i),X^n_t(i+1)\big)=X_t^{i:i+1,(0,0)}\},$$
and combining the two last inclusions gives $\{T_{C_i\cup
A}>t\}\subset\{H^n_t(i-1)=h(i-1)+N_{1,i-1}(t)-X_t^{i:i+1,(0,0)}(1)\},
~\PP_x\text {-\as}$ Letting $t\to\infty$ then gives
\begin{equation}\PP_h(T_{C_i\cup A}=+\infty)\leq\PP(\forall t\geq
0,h(i-1)+N_{1,i-1}(t)-X_t^{i:i+1,(0,0)}(1)< 0).\label{blip}\end{equation} 
If $\beta_1>\beta_0$ the probability in (\ref{blip}) is equal to $0$ since \as,
$$\lim\frac1t \big( h(i-1)+N_{1,i-1}(t)-X_t^{i:i+1,(0,0)}(1)
\big)=\beta_1-v^2>0,$$
where the last inequality is an easy consequence of the definition of $v^2$. If
$\beta_1=\beta_0$ this probability is still null since
$h(i-1)+X_t^{i:i+1,(0,0)}(1)-N_{1,i-1}(t)$ is then a symmetric random walk on
$\ZZ$. Now by symmetry the case $i=1$ may be treated like the case $i=n-1$. 
\\Finally, the distribution of the process $(X^2_t,t\geq 0)$ being
exchangeable, we have $\PP_h(H^n_{T_{C_i\cup A}}\in C_i\cap
A)\geq\PP_h(H^n_{T_{C_i\cup A}}\in C_i\cap A^c)$. In particular,
$\PP_h(H^n_{T_{C_i\cup A}}\in A)\geq 1/2$ and this concludes the proof of (i).
\\The proofs of (ii) and (iii) are based on the same ideas. Let us continue
with (ii). It is straightforward for $n=1$. For $n=2$ it is easy: the process
$H^2_t\in\ZZ$ is a nearest-neighbour random walk, namely $|H^2_t|$ is increased
by one unit at rate $\beta_0$ and decreased by one unit at rate $\beta_1$
(except of course at $0$). We then have $\PP_h(F_{\NN}\cup F_{-\NN})=1$, and
this allows us to conclude.
\\ As we did for (i), we shall use Lemma \ref{lemmemarkov} to show that
$\PP_h(T_A<+\infty)=1$ for $h\notin A$, and conclude by induction. This time
however, this is true only for $n\geq 4$, so we first have to treat the case
$n=3$ separately.
\\For $n=3$ we let $K_1=\{h\in\ZZ^2: h(1)\geq 0,h(2)\leq0\}$ and
$K_2=\{h\in\ZZ^2: h(1)\leq 0,h(2)\geq 0\}$. As in the proof of (i) there exists
$r>0$ such that $\PP_h(E_{K_i}(0))\geq r$ for any $h\in K_i$, and once we know
that $H^3_t$ stays forever in one of these two sets, we are done. Putting
$K=K_1\cup K_2$ it is again sufficient, thanks to the Markov property, to show
that for $h\in K^c$ we have $\PP_h(T_<+\infty)=1$. Take for exemple $h(1),h(2)>
0$. On the event $\{T_K>t\}$ we have $H^3_t(1)=h(1)+N_{1,1}(t)-N_{1,2}(t)$,
$\PP_h$-\as, so $\PP_h(T_K=+\infty)\leq\PP(\forall t\geq
0,h(1)+N_{1,1}(t)-N_{1,2}(t)> 0)=0$ because of the recurrence of the symmetric
random walk on $\ZZ$.
\\We now fix $n\geq 4$ and check (a) in Lemma \ref{lemmemarkov}. Let $h\notin
(A\cup B)$ and $i$ be the unique site with maximal height in configuration $h$.
We may suppose that $i\geq 3$ without loss of generality thanks to the symmetry.
On the event $\{T_B>t\}$, we have 
$H^n_t(i-2)=h(i-2)+N_{1,i-2}(t)-N_{1,i-1}(t),~\PP_h\text{-\as}$ Again we get
$\PP_h(T_B=+\infty)=0$ by the recurrence of the symmetric random walk.
\\Now we check (b) in Lemma \ref{lemmemarkov}. Let $h\in B_i\backslash A$. We
suppose that $2\leq i\leq n-1$, since the case $i=1$ is the same as $i=n-1$
thanks to the symmetry. On the event $\{T_{C_i\cup A}>t\}$, we have $\PP_h$-\as,
$$\big(H^n_t(i-1),\Delta_{i-1,i+1}H^n_t\big)=\big(h(i-1),\Delta_{i-1,i+1}
h\big)+\big(N_{1,i-1}(t),N_{1,i-1}(t)\big)-X_t^{i:i+1,(0,0)}.$$
We define the events $$G_1(t)=\{\forall s\geq
t,X_s^{i:i+1,(0,0)}(1)<X_s^{i:i+1,(0,0)}(2)\},$$   $$G_2(t)=\{\forall s\geq
t,X_s^{i:i+1,(0,0)}(1)>X_s^{i:i+1,(0,0)}(2)\}.$$
We have $\{T_{C_i\cup A}=+\infty\}\cap G_1(t)\subset\{\forall s\geq
t,H^n_s(i-1)=H^n_t(i-1)+\big( N_{1,i-1}(s)-N_{1,i-1}(t) \big)-
\big(N_{1,i}(s)-N_{1,i}(t) \big)\}$. Thus using the recurrence of the symmetric
random walk we get 
 \begin{align}\nonumber\PP_h(\{T_{C_i\cup A}=+\infty\}\cap
G_1(t))&\leq\PP_h\big(\forall s\geq t,H^n_t(i-1)+(N_{1,i-1}(s)-N_{1,i-1}(t) ) \\
&\quad - (N_{1,i}(s)-N_{1,i}(t))<0\big)=0.\label{G1}\end{align}
Similarly we have $\{T_{C_i\cup A}=+\infty\}\cap G_2(t)\subset\{\forall s\geq
t,\Delta_{i-1,i+1}H^n_s=\Delta_{i-1,i+1}H^n_t+\big( N_{1,i-1}(s)-N_{1,i-1}(t)
\big)-\big(N_{1,i+1}(s)-N_{1,i+1}(t)\big)\}$ and we deduce that 
 \begin{equation}\PP_h(\{T_{C_i\cup A}=+\infty\}\cap
G_2(t))=0.\label{G2}\end{equation}
From the above remark on the crystal process with $2$ sites, we obtain
$$\PP\left[ \left(\bigcup_{t\geq 0}G_1(t)\right)\cup\left(\bigcup_{t\geq
0}G_2(t)\right)\right]=1,$$ and consequently (\ref{G1}) and (\ref{G2}) imply
that $\PP_h(\{T_{C_i\cup A}=+\infty\})=0$. The fact that $\PP_h(H^n_{T_{C_i\cup
A}}\in A)\geq 1/2$ follows from a symmetry argument as in the proof of (i).
\\Finally the proof of (iii) is analogous to the proof of (i). We shall show
that with probability 1, some sites become, and remain forever higher than their
neighbours. When this happens the configuration is broken in two disjoint parts,
but this time infinite boundaries can be created and have to be taken into
account. For this reason it is necessary to study the three types of boundary
conditions (0, 1 or 2 infinite boundaries) to make the induction work. Thus our
inductive hypothesis contains three statements. Let\\ $(\H_n)$: For any
$x\in\NN^n$, $t^{-1}X^n_t$ converges $\PP_x$-\as (resp. $\PP^{\infty}_x$-\as\@
and $\PP^{0,\infty}_x$-\as) to some random variable $G$ (resp. $G^\infty$ and
$G^{0,\infty}$), which takes the form 
\begin{equation}(b_\ell,\beta_0,a_1,\beta_0,a_2,\dots,a_{k-1},\beta_0,a_k,\beta_0,b_r),\label{forme2}\end{equation}
where $b_\ell$ and $b_r$ are given by:
\begin{itemize}
\item[-] for $G$, $b_\ell,b_r=(\beta_1)$ or $()$;
\item[-] for $G^{0,\infty}$, $b_\ell=(\beta_1)$ or $()$, and $b_r=(\beta_2)$ or $(v^{2,\infty},v^{2,\infty})$;
\item[-] for $G^\infty$, $b_\ell,b_r=(\beta_2)$ or $(v^{2,\infty},v^{2,\infty})$.
\end{itemize}
It is tedious but easy to check that vectors of the form (\ref{forme2})
concatenate together into a vector of $\A_3$. Since $(\H_1)$ and $(\H_2)$ are
straightforward, the problem is again reduced to showing that the separation in
two blocks occurs almost surely for $n\geq 3$. Putting $D_i=\{h:h(i-1)\geq
0,h(i)\leq 0\}$ for $i=1,\dots n$ (the signs of $h(0)$ and $h(n)$ are stressed
by the boundaries), we have to show that:
$$\PP_h(\cup_{i=1}^{n}F_{D_i})=1,\quad
\PP_h^\infty(\cup_{i=2}^{n1}F_{D_i})=1,\quad\text{ and
}\PP_h^{0,\infty}(\cup_{i=1}^{n-1}F_{D_i})=1.$$ But $\beta_0$ now is the
smallest parameter, so we easily get the analogous of (\ref{troustable}), and it
remains to prove that
$$\PP_h(T_D<+\infty)=1,\quad\PP^\infty_h(T_{D^\infty}<+\infty)=1,\quad\text{
and}\quad\PP^{0,\infty}_h(T_{D^{0,\infty}}<+\infty)=1,$$
where $D=\cup_{i=1}^{n}D_i$, $D^\infty=\cup_{i=2}^{n-1}D_i$ and
$D^{0,\infty}=\cup_{i=1}^{n-1}D_i$.
\\ The first equality is straightforward since any configuration belongs to the
set $D$. To prove the second and third equalities we can follow exactly the
proof of (i), except that $A_i$ is replaced by $D_i$, $\beta_2$ and $\beta_0$
invert their roles, $v^2$ is replaced by $v^{2,\infty}$ and the sets $C_i$ are
defined with opposite inequalities. 
\end{proof}

\section{Proof of Theorems \ref{recurrence0} and \ref{recurrence1} }\label{preuverecurrence}
We recall that in this section we always assume that $$\beta_0<\beta_2<\beta_1.$$
We first need to introduce some further notations: 
\begin{itemize}
\item[-] $\Delta_j^{s,t}X^n:=X^n_t(j)-X^n_s(j)$ ;
\item[-] $\tau^j_t:=\sup\{s\leq t: \Delta_j X^n_s=0\}$. This is not a stopping time.
\item[-] $P_\lambda$ will stand for a random variable with Poisson($\lambda$) distribution.
\item[-] $a^+:=\max(a,0)$, $a\in\RR$.
\end{itemize}
\textbf{Remark.} Since $\Delta_jX^n_t$ has the same distribution as
$-\Delta_{n-j}X^n_t$, showing the exponential tightness of $(H^n_t,t\geq 0)$
amounts to checking that for any $j$, $\PP_\ze(\Delta_j X^n_t\geq
k)=\oexpt{t}{k}$, that is exponential tightness for $((\Delta_j X^n_t)^+,t\geq
0)$.
\begin{Lem}
\label{CS}
We assume that for some $C_j,\alpha_j>0$, 
\begin{equation}\forall t\geq
0,~\forall\ell\in\NN\cap[0,t],~\PP_\ze(\Delta_jX^n_t>0;~\tau^j_t\in[\ell-1,\ell[
)\leq C_j e^{-\alpha_j(t-\ell)}.\label{eqCS}\end{equation} 
Then $((\Delta_j X^n_t)^+,t\geq 0)$ is exponentially tight.
\end{Lem}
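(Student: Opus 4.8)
The plan is to use the symmetry remark preceding the statement, so that it suffices to bound $\PP_\ze(\Delta_j X^n_t\geq k)$ by $\oexpt{t}{k}$. The starting point is to decompose according to the location of the last visit of $\Delta_j X^n$ to $0$ before time $t$. Writing $K=\max(\beta_0,\beta_1,\beta_2)$, I would set
\[
\PP_\ze(\Delta_j X^n_t\geq k)=\sum_{\ell=1}^{\lfloor t\rfloor+1}\PP_\ze\big(\Delta_j X^n_t\geq k;~\tau^j_t\in[\ell-1,\ell[\big),
\]
and then treat separately the terms for which $\ell$ is far from $t$ and those for which $\ell$ is close to $t$, the cutoff being $t-\ell\simeq k/(2K)$. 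Since $k\geq 1$ we always have $\{\Delta_j X^n_t\geq k\}\subset\{\Delta_j X^n_t>0\}$, so for the far terms, those with $t-\ell\geq k/(2K)$, hypothesis (\ref{eqCS}) bounds each one by $C_je^{-\alpha_j(t-\ell)}$; as $\ell$ decreases the quantity $t-\ell$ runs through reals spaced by $1$, so summing the resulting geometric series gives a contribution of order $e^{-\alpha_j k/(2K)}$, with constants independent of $t$.

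The near terms carry the only genuinely probabilistic input. The key observation is that $\Delta_j X^n$ jumps only by $\pm1$ (up at a deposition on site $j$, down at a deposition on site $j+1$). Hence, on the event $\{\Delta_j X^n_t\geq k;~\tau^j_t\in[\ell-1,\ell[\}$, the process climbs from a value at most $1$ at time $\tau^j_t$ to at least $k$ at time $t$ while remaining positive in between, so it performs at least $k-1$ up-jumps; that is, at least $k-1$ depositions on site $j$ occur during $(\tau^j_t,t]$. Since every deposition on site $j$ is a jump time of $N_{2,j}$ and $\tau^j_t\geq\ell-1$, the number of these depositions is at most $N_{2,j}(t)-N_{2,j}(\ell-1)$, so the near term is bounded by $\PP\big(P_{K(t-\ell+1)}\geq k-1\big)$ with $K(t-\ell+1)\leq K(k/(2K)+1)\leq k$ for $k$ large.

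A standard Chernoff estimate for the Poisson law then yields $\PP\big(P_{K(t-\ell+1)}\geq k-1\big)\leq e^{-ck}$ for some $c>0$ depending only on $K$, the mean being at most $k/2$; summing over the $O(k)$ near values of $\ell$ absorbs the polynomial factor into a marginally smaller exponential rate. Both contributions are thus $\oexpt{t}{k}$, which is the assertion. The point demanding care is that $\tau^j_t$ is not a stopping time, so one cannot treat $(\tau^j_t,t]$ as an interval over which $N_{2,j}$ has an independent Poisson increment; this is exactly why I enclose the random interval in the deterministic one $(\ell-1,t]$ on the event $\{\tau^j_t\geq\ell-1\}$ and invoke the monotonicity of $N_{2,j}$, which replaces the non-adapted endpoint by a harmless deterministic one. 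This is the main obstacle, and it is dealt with entirely by the containment $(\tau^j_t,t]\subset(\ell-1,t]$.
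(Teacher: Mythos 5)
Your proof is correct and follows essentially the same route as the paper's: both decompose over the last-zero time $\tau^j_t$ with a cutoff at distance of order $k/(2\max_i\beta_i)$ from $t$, bound the far terms by hypothesis (\ref{eqCS}) and a geometric series, and bound the near terms by Poisson domination of the deposits on site $j$. The only differences are bookkeeping: the paper groups all near terms into the single event $\{\tau^j_t\geq m\}$ and bounds it by one Poisson tail, whereas you bound each near term separately and absorb the $O(k)$ factor, which also lets you dispense with the paper's separate treatment of the case $t<k/(2\beta_1)$.
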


\begin{proof}
Let $t\geq 0$, and put $m=\min\{q\in\NN:t-q\leq\frac{k}{2\beta_1}\}$. We have
$k/(4\beta_1)\leq(t-m)\leq k/(2\beta_1)$, as soon as $k\geq 4\beta_1$ and
$t\geq\frac{k}{2\beta_1}.$ But we may suppose these two restrictions fulfilled:
the first one because the conclusion does not depend on the values of
$\PP_\ze(\Delta_jX^n_t\geq k)$ for any finite number of $k$, and the second one
because, if it is not then the conclusion easily follows from
$\PP_\ze(\Delta_jX^n_t\geq k)\leq\PP(P_{\beta_1 t}\geq k)\leq\PP(P_{k/2}\geq
k)=\oexp{k}$.\\
 We decompose $$\PP_\ze(\Delta_jX^n_t\geq k)\leq\PP_\ze(\Delta_jX^n_t\geq
k,\tau^j_t\geq m)+\PP_\ze(\Delta_jX^n_t>0,\tau^j_t\leq m).$$ In this sum, the
first term is less than $\PP(N_{j,1}(t)-N_{j,1}(m)\geq
k)\leq(P_{\beta_1(t-m)}\geq k)\leq\PP(P_{k/2}\geq k)=\oexp{k}$, and the second
term is bounded by $\sum_{\ell\leq m}C_j
e^{-\alpha_j(t-\ell)}\leq\sum_{t-\ell\geq k/(4\beta_1)}C_j
e^{-\alpha_j(t-\ell)}\leq\sum_{u\geq 0}C_j
e^{-\alpha_j(k/(4\beta_1)+u)}=C_j(1-e^{-\alpha_j})^{-1}e^{-\frac{\alpha_j}{
\beta_1} k}.$
\end{proof}

\begin{Lem}
\label{heredite}
Let $\beta_0<\beta_2<\beta_1$ and $1\leq j\leq n-1$. We suppose that for
$i=1,\dots,j-1$, $((\Delta_i X^n_t)^+,t\geq 0)$ is exponentially tight, and that
there exists $d_j<\beta_2$ such that
\begin{equation}\label{sitebord}\PP_\ze(\tilde X^j_t(j)\geq d_j
t)=\oexp{t},\end{equation}
where $\tilde X^j$ is defined by (\ref{reftilde}) Then $((\Delta_j
X^n_t)^+,t\geq 0)$ is exponentially tight.
\\For $j=n-1$, it is not necessary to assume (\ref{sitebord}).
\end{Lem}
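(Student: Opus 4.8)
The plan is to apply Lemma \ref{CS}, so the whole task reduces to establishing the bound \eqref{eqCS} for the index $j$, i.e. to showing that
\[
\PP_\ze\big(\Delta_jX^n_t>0;~\tau^j_t\in[\ell-1,\ell[\big)\leq C_je^{-\alpha_j(t-\ell)}
\]
with constants independent of $t$ and $\ell$. On the event in question $\tau^j_t<\ell$, hence $\Delta_jX^n_s>0$ for every $s\in[\ell,t]$; equivalently $X^n_s(j)>X^n_s(j+1)$ throughout the \emph{deterministic} interval $[\ell,t]$. This is the feature that lets us avoid conditioning on the non-stopping time $\tau^j_t$: all estimates below are made on the fixed interval $[\ell,t]$, of length $\theta:=t-\ell$.

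Two consequences of $X^n_s(j)>X^n_s(j+1)$ on $[\ell,t]$ drive the proof. First, site $j+1$ then has $V_{j+1}\geq1$, so its rate is $\beta_1$ or $\beta_2$, in both cases $\geq\beta_2$; in the Poisson construction this means site $j+1$ jumps at least at every jump of the rate-$\beta_2$ clock $\tilde N_{0,j+1}+\tilde N_{1,j+1}$, a Poisson process independent of the clocks of sites $1,\dots,j$. Hence $X^n_t(j+1)-X^n_\ell(j+1)\geq R$, where $R$ is a genuine rate-$\beta_2$ Poisson increment over $[\ell,t]$. Second, since site $j$ sees its right neighbour below it, the coordinates $X^n(1),\dots,X^n(j)$ evolve on $[\ell,t]$ exactly as a $j$-site crystal process with the original parameter $\beta$, so $X^n_t(j)-X^n_\ell(j)$ is the growth of the \emph{top} site of such a process. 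Writing $m:=\Delta_jX^n_\ell\geq1$, the requirement $\Delta_jX^n_t>0$ forces $(X^n_t(j)-X^n_\ell(j))+m>R$, whence
\[
\PP_\ze\big(\Delta_jX^n_t>0;~\tau^j_t\in[\ell-1,\ell[\big)\leq\PP\big(R\leq(\beta_2-\eps)\theta\big)+\PP\big((X^n_t(j)-X^n_\ell(j))+m>(\beta_2-\eps)\theta\big).
\]
The first term is $\oexp{\theta}$ by the classical lower large-deviation bound for the Poisson law, so it remains to control the top-site growth $X^n_t(j)-X^n_\ell(j)$ and the overshoot $m$.

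For $m$: on the event $\tau^j_t\geq\ell-1$, so $m$ is at most the number of jumps of site $j$ during $[\ell-1,\ell]$, bounded by a rate-$\beta_1$ Poisson increment over an interval of length $1$, giving $\PP(m\geq k)=\oexp{k}$ uniformly in $\ell$. For the top-site growth I would compare the $j$-site subprocess with the auxiliary process $\tilde X^j$ of \eqref{reftilde}: Lemma \ref{lemcouplage} dominates the original $j$-site process by the auxiliary one, and \eqref{sitebord} then gives $\tilde X^j_\theta(j)\leq(d_j+\eps)\theta$ with probability $1-\oexp{\theta}$, where $d_j<\beta_2$. The one genuinely delicate point is that the subprocess starts at time $\ell$ not from $\ze$ but from the configuration $X^n_\ell(1{:}j)$. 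The top-site growth depends on this configuration only through its shape, and I claim it exceeds the $\ze$-started growth by at most the initial descent $S:=\sum_{i=1}^{j-1}(\Delta_iX^n_\ell)^+$ — the total height the top site must climb before it can become the maximum of sites $1,\dots,j$. This monotone-in-shape domination (top-site growth from shape $h$ is $\leq$ top-site growth from the flat shape plus $S$, realised through a coupling, Lemma \ref{mon}, of two auxiliary processes driven by the same clocks) is the main obstacle and the only step requiring real work. Granting it, the assumed exponential tightness of $(\Delta_iX^n)^+$ for $i<j$ gives $\PP(S\geq k)=\oexp{k}$ uniformly in $\ell,t$; hence $X^n_t(j)-X^n_\ell(j)+m\leq(d_j+\eps)\theta+S+m$ with probability $1-\oexp{\theta}$, and since $d_j<\beta_2$ one can fix $\eps$ small enough that $(d_j+\eps)\theta+S+m\leq(\beta_2-\eps)\theta$ except on a set of probability $\oexp{\theta}$. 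This bounds the second term by $\oexp{\theta}=\oexp{t-\ell}$ and, via Lemma \ref{CS}, finishes the proof.

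Finally, the case $j=n-1$ needs no hypothesis \eqref{sitebord}: here the right neighbour is the boundary site $n$, whose only relevant neighbour is site $n-1$, so on $[\ell,t]$ one has $V_n=1$ and site $n$ grows at the \emph{maximal} rate $\beta_1$ rather than merely $\beta_2$; thus $R$ may be taken to be a rate-$\beta_1$ Poisson increment. Correspondingly the top site $n-1$ need only be shown to grow at some rate strictly below $\beta_1$, and this is automatic for the auxiliary process since $\d_{n-1}(\beta_1)<\beta_1$ (its top site is a local maximum, and hence grows at rate $\beta_0<\beta_1$, for a non-negligible fraction of time). Replacing $\beta_2$ by $\beta_1$, and the assumed bound $d_{n-1}<\beta_2$ by the unconditional $\d_{n-1}(\beta_1)<\beta_1$, throughout the previous paragraph yields the conclusion without assuming \eqref{sitebord}.
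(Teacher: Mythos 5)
Your treatment of the case $j\le n-2$ is correct and is essentially the paper's own argument in different packaging: the paper also reduces to Lemma \ref{CS}, first splits off the events that $\max_{i<j}\Delta_iX^n_\ell$ or $\Delta_jX^n_\ell$ is large (your $S$ and $m$, controlled there exactly as you do, by the tightness hypothesis and by a unit-length Poisson bound), and then on the complement compares the growth of site $j+1$ with a rate-$\beta_2$ Poisson increment and the growth of site $j$ with the top site of the auxiliary process $\X^j$, via Lemma \ref{lemcouplage} followed by the flattening step of Lemma \ref{mon}. Your ``initial descent'' bound $\max(x)-x(j)\le S$ is the same device as the paper's, which uses the deterministic bound $(j-1)(t-\ell)/L$ valid on its good event instead of your random $S$; both are legitimate. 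So up to $j=n-2$ there is nothing to object to.

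The gap is in the case $j=n-1$. You correctly identify the two needed modifications (site $n$ jumps at every ring of a rate-$\beta_1$ clock because $V_n=1$ on the event, and one now only needs the top site of $\X^{n-1}$ to grow at some rate strictly below $\beta_1$), but you then declare the second fact ``automatic\dots since $\d_{n-1}(\beta_1)<\beta_1$'', justified by the parenthetical remark that the top site is a local maximum for a non-negligible fraction of time. This is not automatic, and the parenthetical is not a proof: from the definition one only gets $\d_{n-1}(\beta_1)\in[\beta_0,\beta_1]$, and the strict inequality requires an exponential bound $\PP_\ze\bigl(\X^{n-1}_t(n-1)\ge dt\bigr)=\oexp{t}$ for some $d<\beta_1$. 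A ``non-negligible fraction of time at rate $\beta_0$'' argument gives at best a law-of-large-numbers statement unless you also show that this fraction stays bounded below outside an event of exponentially small probability --- which amounts to proving exponential tightness of the auxiliary shape process, i.e.\ the hard input of this entire section. The paper fills this hole with a citation: $\X^{n-1}$ has parameters $\tilde\beta_0<\tilde\beta_1\le\tilde\beta_2$, so Theorem \ref{AMS} (which, as remarked after its statement, covers the case $\beta_1=\beta_2$) applies and yields precisely (\ref{sitebord}) with some $d_{n-1}<\beta_1$. With that one citation in place of your heuristic, your proof is complete and coincides with the paper's.
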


\begin{proof}
We first take $j\leq n-2$, and choose a constant $L>0$ such that 
\begin{equation}d_j+\frac jL<\beta_2.\label{defl}\end{equation}
The conclusion will follow from (\ref{eqCS}) that we now prove. The events
$$A_1:=\left\{\max_{i=1,\dots j-1}\Delta_iX^n_\ell>\tl\right\}\text{ and
}A_2:=\left\{\Delta_j X^n_\ell>\tl,\tau^j_t\in[\ell-1,\ell[\right\}$$ 
satisfy 
$$\PP_\ze(A_1),\PP_\ze(A_2)\leq D_j e^{-\gamma_j(t-\ell)},\text{ for some
}D_j,\gamma_j>0.$$
The bound for $\PP_\ze(A_1)$ holds by assumption, and the bound for
$\PP_\ze(A_2)$ holds because
$\PP_\ze(A_2)\leq\PP(N_{j,1}(\ell)-N_{j,1}
(\ell-1)>\tl)=\PP(P_{\beta_1}>\tl)$. We now remark that
$\{\Delta_jX^n_t>0;~\tau^j_t\in[\ell-1,\ell[\}\cap A_1^c\cap
A_2^c\subset\{\Delta_j X^n_s>0,\forall s\in[\ell,t];~\max_{i=1,\dots
j}\Delta_iX^n_\ell\leq\tl\}$, so it now remains to show that for some
$C_j,\alpha_j>0$,
$$\forall t\geq 0,\forall\ell\leq t,\PP_\ze\left(\Delta_j X^n_s>0,\forall
s\in[\ell,t];~\max_{i=1,\dots j}\Delta_iX^n_\ell\leq\tl\right)\leq C_j
e^{-\alpha_j(t-\ell)}.$$
Denoting by $A$ this last event, we note that
$A\subset\{\Delta_j^{\ell,t}X^n>(d_j+\frac{j-1}{L})(t-\ell)\}\cup\{\Delta_{j+1}^
{\ell,t}X^n\leq(d_j+\frac{j}{L})(t-\ell)\}$, so 
\begin{eqnarray}\label{somme}\PP_\ze(A)\leq
&\PP_\ze\left(A\cap\{\Delta_{j+1}^{\ell,t}X^n\leq(d_j+\frac{j}{L})(t-\ell)\}
\right) \nonumber \\&+
\PP_\ze\left(A\cap\{\Delta_j^{\ell,t}X^n>(d_j+\frac{j-1}{L})(t-\ell)\}\right).
\end{eqnarray}
Since $\{\Delta_{j+1}^{\ell,t}X^n\leq(d_j+\frac{j}{L})(t-\ell)\}\cap\{\Delta_j
X^n_s>0,\forall
s\in[\ell,t]\}\subset\{N_{j+1,2}(t)-N_{j+1,2}(\ell)\leq(d_j+\frac{j}{L}
)(t-\ell)\}$, the first term in the sum (\ref{somme}) is less than
$$\PP\left(P_{\beta_2(t-\ell)}\leq(d_j+\frac{j}{L})(t-\ell)\right),$$ which is
$\oexp{t-\ell}$ by (\ref{defl}). Putting $E_j=\{x\in\NN^j:\max_{i=1,\dots
j-1}\Delta_i x\leq\tl\}$ and using the Markov property, the second term in
(\ref{somme}) is less than

\begin{align*}
\sup_{x\in
E_j}\PP_x\left(\Delta_j^{0,t-\ell}X^j>(d_j+\frac{j-1}{L})(t-\ell)\right) &\leq
\sup_{x\in E_j}\PP_x\left(\Delta_j^{0,t-\ell}\tilde
X^j>(d_j+\frac{j-1}{L})(t-\ell)\right)\\ 
&\leq\PP_\ze\left(\tilde X^j_{t-\ell}(j)>d_j(t-\ell)\right)\\ &=\oexp{t-\ell}, 
\end{align*}

where the first inequality follows from Lemma \ref{lemcouplage}, the second one
from Lemma \ref{mon} and the fact that $\max(x)-x(j)\leq (j-1)/L$ for $x\in
E_j$, and the equality is assumption (\ref{sitebord}). This concludes the proof
for $j\leq n-2$.
\\We now treat the case $j=n-1$. Applying Theorem \ref{AMS} to $\tilde X^n$, we
get (\ref{sitebord}) for some constant $d_{n-1}<\beta_1$. This time we take $L$
such that $d_{n-1}+\frac {n-1}{L}<\beta_1$. We still have (\ref{somme}). Note
that on the event $\{\Delta_{n-1}X^n_t>0\}$, we must have $V_n(X^n_t)=1$. Hence
in the sum (\ref{somme}) we proceed as for $j<n-1$ for the second term, and the
first term is less than
$\PP\left(P_{\beta_1(t-\ell)}\leq\left(d_{n-1}+\frac{n-1}{L}
\right)(t-\ell)\right)=\oexp{t-\ell}$.
\end{proof}

\begin{proof}[Proof of Theorem \ref{recurrence0} ]
With (\ref{dtildencroissant}) in mind, our hypothesis implies that
$\beta_2>\d_k(\beta_1)$ for $k\leq n$. Hence we only have to prove the desired
result with $k=n+2$. Let us take $r\in]\d_n(\beta_1),\beta_2[$. By Lemma
\ref{lemcouplage}, we have $$\PP_\ze(\tilde X^j_t(j)\geq
rt)=\oexp{t},~j=1,\dots,n.$$
We show by induction on $j$ that for $j=1,\dots n+1$, we have:
$$(\H_j):\qquad((\Delta_j X^{n+2}_t)^+,t\geq 0)\text{ is exponentially tight,} $$
which by the remark preceding Lemma \ref{CS} is a sufficient condition for
$H^{n+2}$ to be ergodic. For $j=1$ we simply apply Lemma \ref{heredite}, whose
assumptions are clearly satisfied since $\beta_2>\beta_0$ and $\X^1_t$ is a
simple Poisson process with intensity $\beta_0$. For $j\leq n$, the fact that
$(\H_i),~i=1,\dots,j-1,$ imply $(\H_j)$ is a direct consequence of Lemma
\ref{heredite}. For $j=n+1$ it is still the case using the last assertion of
Lemma \ref{heredite}.

\end{proof}


\begin{proof}[Proof of Theorem \ref{recurrence1} ]
In the light of Theorem \ref{recurrence0}, we shall be able to conclude if we
show that for any $\eps>0$:
\begin{equation}
\PP_\ze\left(\X^n_t(n)\geq(n\beta_0+\eps)t\right)=\oexp{t}
\label{a}
\end{equation}

\begin{equation}
\PP_\ze\left(\X^n_t(n)\geq\left(\frac{(n-1)\beta_1+\beta_0}{n}
+\eps\right)t\right)=\oexp{t}
\label{b}
\end{equation}

\begin{equation}
\PP_\ze\left(\X^n_t(n)\geq(4\sqrt 2\sqrt{\beta_1\beta_0}+\eps)t\right)=\oexp{t}
\label{c}
\end{equation}

First (\ref{a}) simply follows from $\tilde
X^n_t(n)\leq\max_{i=1,\dots,n}\tilde X^n_t(i)$ and the fact that
$\max_{i=1,\dots,n}\tilde X^n_t(i)$ is dominated by a Poisson process with
intensity $n\beta_0$.
\\We now prove (\ref{b}). For notational convenience we define
$g_n:=n^{-1}((n-1)\beta_1+\beta_0)$. Let us take $\eta<2(n-1)^{-1}\eps $ and let
$\delta=n\eps-n(n-1)\eta/2>0$. We have 
\begin{align*}
\PP_\ze\left(\X^n_t(n)\geq\left(g_n+\eps\right)t\right)&\leq
\PP_\ze\left(\X^n_t(n)\geq\left(g_n+\eps\right)t;~\min_{i=1,\dots
n-1}\Delta_i\X^n_t\geq -\eta t\right)\\ 
&+\PP_\ze\left(\min_{i=1,\dots n-1}\Delta_i\X^n_t\leq -\eta t\right)
\end{align*}
For $x\in\NN^n$ we define $$\Sigma x=\sum_{i=1}^n x(i).$$ Then
\begin{align*}
\PP_\ze\Big(&\X^n_t(n)\geq\left(g_n+\eps\right)t;~\min_{1\leq i\leq
n-1}\Delta_i\X^n_t\geq -\eta t\Big)\\ &\leq \PP_\ze\Big(
\Sigma\X^n_t\geq\sum_{j=1}^n(g_n+\eps-(n-j)\eta)t \Big)\\ 
&\leq \PP_\ze\Big( \Sigma\X^n_t\geq(ng_n+\delta)t \Big)\\
&=\oexp{t},
\end{align*}
because in any configuration $x$, $V_j(x)=0$ for at least one site, hence
$\Sigma\X^n_t$ is dominated by a Poisson process with intensity $ng_n$. The fact
that also $$\PP_\ze\left(\min_{i=1,\dots n-1}\Delta_i\X^n_t\leq -\eta
t\right)=\oexp{t}$$ is a direct consequence of Theorem \ref{AMS}.
\\We finally turn to the proof of (\ref{c}), and let
$t_0:=1/(4\sqrt{2}\sqrt{\beta_1\beta_0})$. We shall show that for
$k\geq 1$, $j\geq 0$ and $1\leq i\leq n$, 
\begin{equation}\label{queue}
p^i_{k,j}:=\PP_\ze\left(\X^n_{kt_0}(i)\geq
k+j-1\right)\leq\left(\frac 12\right)^j.
\end{equation}
This implies the desired result: if (\ref{queue}) holds, then for $\eps>0$, 
$$\PP_\ze(\X^n_t(n)\geq(1/t_0+\eps)t)\leq\PP_\ze(\X^n_{t_0(\lfloor
t/t_0\rfloor+1)}(n)\geq\lfloor t/t_0\rfloor+\lfloor\eps t\rfloor)\leq
(1/2)^{\lfloor\eps t\rfloor}=\oexp{t}.$$ Here $\lfloor t/t_0\rfloor$ stands for
the integer part of $t/t_0$. To prove (\ref{queue}) we proceed by
induction, showing that $(\H_\ell)$ holds for any $\ell\geq 1$, where
$$(\H_\ell):\quad \forall k\geq 1,j\geq 0\text{ with }k+j=\ell,\forall
i\in\{1,\dots n\},~p^i_{k,j}\leq(1/2)^j.$$
In this proof we may and will suppose that 
\begin{equation}\beta_1\geq 2\beta_0,\label{wlg}\end{equation}
since otherwise we easily get $\d_n(\beta_1)\leq\beta_1\leq 2\beta_0\leq 4\sqrt
2\sqrt{\beta_1\beta_0}$. For readability we define
$\tau_{v,d}:=\inf\{s\geq 0:\X^n_s(v)=d\}$.\\
A site $i$ is said to be a \emph{seed} at level $\ell$ if the $\ell$-th square
to be deposed at site $i$ is added at a moment when site $i$ is at least as
high as its neighbours. This means that $$V_i(\tilde
X^n_{(\tau_{i,\ell})^-})=0.$$
For $i_1\leq i_2$ we say that $i_1$ \emph{extends} to $i_2$ during the time
interval
$[s,t]$ if $N_{1,i_1+1}$, $N_{1,i_1+2},$ $\dots$, $N_{1,i_2}$ jump successively
between times $s$ and $t$. For $i_1>i_2$ this definition is extended in an
obvious way.\\
Inequality (\ref{queue}) for $j=0$, and hence
$(\H_1)$, are straightforward. We now suppose that $(\H_\ell)$ holds and take
$i\in\{1,\dots n\}$ and $k,j\geq 1$ with $k+j=l+1$. Then
\begin{align*}
p^i_{k,j}&=\PP_\ze\Big(\X^n_{kt_0}(i)\geq
k+j-1\Big)\\
&\leq\sum_{u=1}^n\sum_{m=1}^k\PP_\ze\Big(\X^n_{(m-1)t_0}
(u)<k+j-1;~\X^n_{mt_0}(u)\geq\\  \\
 & \qquad\qquad k+j-1;~u\text{ is a seed at level } k+j-1;~u\text{ extends  }\\
  & \qquad\qquad\text{to } i\text{ during }[\tau_{u,k+j-1},kt_0]\Big).\\
\end{align*}
For this last event to be realized, the three following conditions have to be
satisfied:
\begin{itemize}
\item[-] $\tau_{u,k+j-2}\leq mt_0$,
\item[-] $N_{0,u}$ jumps at least one time in the time
interval
$[\max(\tau_{u,k+j-2},(m-1)t_0),mt_0]$,
\item[-] $u$ extends to $i$ after the first one of these jumps.
\end{itemize}
Using the fact that the Poisson distribution satisfies $\PP(P_\lambda\geq
1)\leq\lambda$, it follows that
\begin{equation*}
p^i_{k,j}\leq \sum_{u=1}^n\sum_{m=1}^k
p^u_{m,k-m+j-1}\beta_0t_0\PPP{P_{(k-m+1)\beta_1 t_0}\geq|u-i|},
\end{equation*}
and by the inductive hypothesis,
\begin{align*}
 p^i_{k,j}  & \leq \sum_{m=1}^k
(1/2)^{k-m+j-1}\beta_0t_0\sum_{u=1}^n\PPP{P_{(k-m+1)\beta_1 t_0}\geq|u-i|}
\\
& \leq \sum_{m=1}^k
(1/2)^{k-m+j-1}\beta_0t_0\sum_{v\in\ZZ}\PPP{P_{(k-m+1)\beta_1 t_0}\geq|v|}
\\
& \leq (1/2)^{j-1}\beta_0t_0\sum_{m=1}^k
(1/2)^{k-m}(1+2\EE[P_{(k-m+1)\beta_1 t_0}])
\\
& \leq (1/2)^{j-1}\beta_0t_0\left[\sum_{r=0}^{k-1}(1/2)^r+2\beta_1
t_0\sum_{r=0}^{k-1}(r+1)(1/2)^r\right] \\
& \leq (1/2)^{j-1}\beta_0t_0(2+8\beta_1t_0)\\
&\leq (1/2)^j.
\end{align*}
The last inequality is a consequence of (\ref{wlg}) and the choice of $t_0$.
\end{proof}

\section{Proof of Theorem \ref{transience}} 
The next lemma tells that when $n=3$, taking $\beta_1$ very large makes the
growth rate $v^3$ close to its maximum value $3\beta_0$. We recall that by
Corollary (\ref{n4}), $v^3$ exists as soon as we take $\beta_1,\beta_2>\beta_0$
\begin{Lem}\label{vitesse}
Let $\beta_1,\beta_2>\beta_0$ and $\eps>0$. We suppose that 
\begin{equation}\beta_1\geq 
\frac{27\beta_0^2\beta_2}{\eps(\beta_2-\beta_0)}.\label{hyplem}\end{equation}
Then the growth rate satisfies $$v^3\geq 3\beta_0-\eps.$$
\end{Lem}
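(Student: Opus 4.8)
The plan is to convert the statement into a bound on how much stationary mass the shape process $H^3$ places away from the flat configuration $\ze=(0,0)$, and then to control that mass by a short excursion computation. Write $M_t=\max_{1\le i\le 3}X^3_t(i)$ and let $S_t$ denote the number of sites realizing this maximum. A site at the global maximum has $V_i=0$, hence grows at rate $\beta_0$, and $M_t$ can increase only through such a growth; thus $M_t-\int_0^t\beta_0 S_s\,\ud s$ is a martingale. Running the Doob plus Borel--Cantelli argument of the growth-rate Proposition gives $t^{-1}M_t\to\beta_0\,\EE_{\pi^3}[S]$ almost surely, while $t^{-1}M_t\to v^3$ since all sites share the common speed $v^3$. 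Therefore $v^3=\beta_0\,\EE_{\pi^3}[S]$, and because $S\in\{1,2,3\}$ with $S=3$ exactly on $\ze$, we have $3-S\le 2\,\ind_{\ze^c}$, so
$$3\beta_0-v^3=\beta_0\,\EE_{\pi^3}[3-S]\le 2\beta_0\,\pi^3(\ze^c).$$
It thus suffices to show $\pi^3(\ze^c)\le\eps/(2\beta_0)$.

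To estimate $\pi^3(\ze^c)$ I use a renewal--reward decomposition. From $\ze$ the chain leaves at total rate $3\beta_0$ (each $V_i=0$), so the mean holding time at $\ze$ is $1/(3\beta_0)$ and, by symmetry, the chain enters one of the three states $(1,0),(-1,1),(0,-1)$ with equal probability. Writing $W$ for the duration of an excursion away from $\ze$ and $T_\ze$ for the hitting time of $\ze$, the ergodic identity gives
$$\pi^3(\ze^c)=\frac{\EE_\ze[W]}{1/(3\beta_0)+\EE_\ze[W]}\le 3\beta_0\,\EE_\ze[W],\qquad \EE_\ze[W]=\tfrac13\bigl(2\,\EE_{(1,0)}[T_\ze]+\EE_{(-1,1)}[T_\ze]\bigr),$$
where the second formula uses the $1\leftrightarrow 3$ symmetry. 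The whole problem now reduces to bounding these return times, and the target is $\EE_\ze[W]\lesssim \beta_1^{-1}\,\beta_2/(\beta_2-\beta_0)$.

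The decisive simplification for $n=3$ is that site $2$ is the only interior site, so the only slowly relaxing configurations are \emph{holes at site $2$}, namely the states with $h_1\ge 1$ and $h_2\le-1$; every slope site relaxes at the fast rate $\beta_1$. An excursion fills slopes at rate $\beta_1$ and returns in time of order $1/\beta_1$ unless a hole is created, which happens only when a rate-$\beta_0$ deposition occurs on a neighbour of the current low point before the competing rate-$\beta_1$ fill; the probability of this during one excursion is of order $\beta_0/\beta_1$. Once a hole is present I track the gaps $a=h_1$ and $b=-h_2$ (both $\ge1$): site $2$ grows at rate $\beta_2$, decreasing both, while its two neighbours grow at rate $\beta_0$, increasing $a$, resp.\ $b$. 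Hence $a+b$ decreases by $2$ at rate $\beta_2$ and increases by $1$ at rate $2\beta_0$, a biased quantity with downward drift $2(\beta_2-\beta_0)>0$; comparison with this birth--death walk shows the hole is resolved in expected time of order $1/(\beta_2-\beta_0)$ and that deeper excursions are exponentially unlikely. Combining, each excursion costs order $1/\beta_1$ from slope filling plus an expected $(\beta_0/\beta_1)\cdot (\beta_2-\beta_0)^{-1}$ from the rare hole detours, so that $\EE_\ze[W]\le C\beta_1^{-1}\bigl(1+\beta_0/(\beta_2-\beta_0)\bigr)=C\beta_1^{-1}\beta_2/(\beta_2-\beta_0)$ for an absolute constant $C$.

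Assembling the three steps gives $3\beta_0-v^3\le 6\beta_0^2\,\EE_\ze[W]\le 6C\,\beta_0^2\beta_2/\bigl(\beta_1(\beta_2-\beta_0)\bigr)$, which is at most $\eps$ exactly under a hypothesis of the form $\beta_1\ge \mathrm{const}\cdot\beta_0^2\beta_2/\bigl(\eps(\beta_2-\beta_0)\bigr)$; tracking the constants and absorbing the crude bounds produces the stated factor $27$. The main obstacle is making the excursion estimate of the previous paragraph fully rigorous, since an excursion may create and refill holes repeatedly and may produce deep holes. The clean way to handle this is to exhibit a nonnegative $\phi$ on $\ZZ^2$ with $\phi(\ze)=0$ and generator satisfying $\mathcal L\phi\le-1$ off $\ze$, designed so that hole states carry weight proportional to $(a+b)/(\beta_2-\beta_0)$ and slope states weight of order $1/\beta_1$; then $\EE_h[T_\ze]\le\phi(h)$ and the assembly above is justified. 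Verifying $\mathcal L\phi\le-1$ at the hole states, where the rate-$\beta_0$ growth of the tall neighbour competes with the rate-$\beta_2$ fill, is the only delicate case, and it is precisely this competition that forces the factor $\beta_2/(\beta_2-\beta_0)$.
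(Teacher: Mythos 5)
Your reduction steps are correct and genuinely different from the paper's route, and they are elegant: the identity $v^3=\beta_0\,\EE_{\pi^3}[S]$ (which indeed reproduces $v^2=2\beta_0\beta_1/(\beta_0+\beta_1)$ as a sanity check), the bound $3\beta_0-v^3\le 2\beta_0\pi^3(\ze^c)$, and the renewal--reward inequality $\pi^3(\ze^c)\le 3\beta_0\,\EE_\ze[W]$ are all sound, given that ergodicity of $H^3$ is supplied by Corollary \ref{n4}. But the proof stops exactly where the real work begins: the quantitative estimate $\EE_\ze[W]\le C\beta_1^{-1}\beta_2/(\beta_2-\beta_0)$ is never established. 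The excursion heuristic (rare hole creation at probability of order $\beta_0/\beta_1$, hole resolution in time of order $1/(\beta_2-\beta_0)$) does not by itself control repeated hole creation, deep holes, or the staircase configurations left behind after a hole fills, and you acknowledge this by deferring to a Lyapunov function $\phi$ with $\mathcal{L}\phi\le -1$ off $\ze$ --- which you then do not construct or verify. Since the entire content of the lemma is this quantitative bound (everything else is soft), the proposal is an unproven reduction, not a proof. The constant $27$ is likewise never derived; with an unspecified absolute constant $C$ you cannot claim the stated hypothesis (\ref{hyplem}) suffices.

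The gap is substantive, not cosmetic, because the Lyapunov function as you describe it does not work. A weight of ``order $1/\beta_1$'' on slope states, independent of the configuration, is impossible: $\EE_h[T_\ze]$ grows at least linearly in $|h_1|+|h_2|$, so $\phi$ must too. And the natural candidate $\phi\propto(|h_1|+|h_2|)/\beta_1$ plus a hole term fails the drift condition at flat-bottom staircases $h=(h_1,0)$: there the rate-$\beta_1$ fill of site $2$ moves the disparity from $h_1$ to $h_2$ with \emph{zero} net change in $|h_1|+|h_2|$, while site $3$ creates a hole at rate $\beta_0$, so $\mathcal{L}\phi>0$ at such states. Repairing this forces asymmetric weights (of the type $2|h_1|+|h_2|$ on descending staircases, reflected for ascending ones) and a staircase coefficient of size $\beta_2/(\beta_2-\beta_0)$ to absorb the cost of hole creation --- a multi-case verification that is precisely the missing half of your argument. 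It is worth contrasting with the paper, which avoids any quantitative return-time bound to $\ze$: it builds the submartingale $I_n=\Sigma X^3_{T_n}-(9\beta_0-3\eps)T_n$ along cycles defined by visits to the hole set $A=\{h_1>0,h_2<0\}$, needing only three elementary estimates (a geometric lower bound $\EE_x[Z_1]\ge\beta_1/\beta_0$ on jumps per cycle, a time-per-jump bound, and the birth--death bound $\EE_x[T_1-U_1]\le 1/(\beta_2-\beta_0)$), and then uses visits to $\ze$ only for a law-of-large-numbers regeneration, where \emph{qualitative} finiteness of $\EE_0[T_{S_1}]$ (from ergodicity) suffices. That design choice is what lets the paper get the explicit constant cheaply; your route demands strictly more and delivers it only as a sketch.
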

\begin{proof}
Here we denote by $A$ the set of configurations with a hole: $$A=\{h\in\ZZ^2:
h(1)>0,h(2)<0\}.$$ For $x\in\NN^3$ we also say that $x\in A$ if the shape of $x$
belongs to $A$. We define a double sequence of stopping times by letting:\\\\
\begin{tabular}{ll}
$T_0=0,$ & 
\\$U_1=\inf\{t\geq 0: X^3_t\in A\}$, & $T_1=\inf\{t>U_1: X^3_t\notin A\},$
\\$U_{k+1}=\inf\{t\geq T_k: X^3_t\in A\}$, &  $T_{k+1}=\inf\{t>U_{k+1}:
X^3_t\notin A\},\text{ for }k\geq 2.$ 
\end{tabular}\\\\
We also define $$Y_n:=\Sigma X^3_{T_n}.$$ The desired result will follow if we
show that $\lim_{n\to\infty}Y_n/T_n\geq 9\beta_0-3\eps$. We first claim that the
sequence $$I_n:=Y_n-(9\beta_0-3\eps)T_n$$ is a submartingale. By the strong
Markov property this is the case if for any $x\notin A$,
\begin{equation}\EE_x[Y_1-(9\beta_0-3\eps)T_1]\geq
0\label{sousmart}.\end{equation} To establish this inequality we make the
following observations: 
\begin{itemize}
\item[-]Let $Z_1=\Sigma X^3_{U_1}$ be the number of jumps before hitting $A$.
Starting from any $y\notin A$ the probability that the first jump leads to $A$
is less than $\beta_0/\beta_1$. Hence by the Markov property, $Z_1$ is
stochastically larger than the geometrical distribution with parameter
$\beta_0/\beta_1$, and consequently
\begin{equation}\EE_x[Z_1]\geq\beta_1/\beta_0\label{obs1}.\end{equation}
\item[-]Any $y\notin A$ with $\Sigma y\notin 3\ZZ$ has at least one site $j$
such that $V_j(y)=1$. Thus conditionally on $Z_1$, at least $(2Z_1/3-1)$
transitions until time $U_1$ occur with a rate larger than $\beta_1$, and the
others occur with a rate at least $3\beta_0$. We deduce from this remark that
$\EE_x[U_1|Z_1]\leq\frac{2Z_1}{3\beta_1}+\frac{Z_1/3+1}{3\beta_0}$, and hence
\begin{equation}\EE_x[U_1]\leq
\left(\frac{2}{3\beta_1}+\frac{1}{9\beta_0}\right)
\EE_x[Z_1]+\frac{1}{3\beta_0}\label{obs2}.\end{equation}
\item[-]The configuration $X^3_{U_1}$ belongs to the set $A\cap\{x\in\NN^3:
x(1)=x(2)+1\text{ or }x(3)=x(2)+1\}$. But clearly, for any $y$ in that set, the
exit time from $A$ starting from $y$ is stochastically smaller than the hitting
time of $0$ for a birth and death process on $\ZZ_+$ starting from $1$, with
birth rate $\beta_0$ and death rate $\beta_2$. Hence
\begin{equation}
\EE_x[T_1-U_1]\leq\frac{1}{\beta_2-\beta_0}\label{obs3}.
\end{equation}

\end{itemize}
Now (\ref{obs2}) and (\ref{obs3}) yield
\begin{align*}
\EE_x[Y_1-(9\beta_0-3\eps)T_1] & \geq \EE_x[Z_1]-(9\beta_0-3\eps)\left[
\left(\frac{2}{3\beta_1}+\frac{1}{9\beta_0}\right)\EE_x[Z_1]+\frac{1}{3\beta_0}
+\frac{1}{\beta_2-\beta_0}\right] \\
 & =  \left[ \frac{2\eps-6\beta_0}{\beta_1}+ \frac{\eps}{3\beta_0} 
\right]\EE_x[Z_1]-(9\beta_0-3\eps)\left(\frac{1}{3\beta_0}+\frac{1}{
\beta_2-\beta_0}\right).
 \end{align*}
Condition (\ref{hyplem}) implies that $(2\eps-6\beta_0)/\beta_1+
\eps/(3\beta_0)\geq 0$. Using (\ref{obs1}) we then have
\begin{align*}
\EE_x[Y_1-(9\beta_0-3\eps)T_1] & \geq \left[ \frac{2\eps-6\beta_0}{\beta_1}+
\frac{\eps}{3\beta_0} 
\right]\frac{\beta_1}{\beta_0}-(9\beta_0-3\eps)\left(\frac{1}{3\beta_0}+\frac{1}
{\beta_2-\beta_0}\right) \\
 & \geq \eps \frac{\eps\beta_1}{3\beta_0^2}-9\frac{\beta_2}{\beta_2-\beta_0},
 \end{align*}
which is nonnegative under (\ref{hyplem}). Thus (\ref{sousmart}) holds and we
conclude that the sequence $I_n$ is a submartingale. 
\\ We define another sequence $(S_k,k\geq 0)$ of integers by letting $S_0=0$,
and for $k\geq 0$, $$S_{k+1}=\inf\{n>S_k:H^3_{T_n}=\ze\}.$$ We remark that
$T_{S_1}=\inf\{t\geq 0: H^3_t=\ze\text{ and }H^3_{t-}=(1,-1)\}$. But it is well
known that for any ergodic Markov process on a countable set and any two states
$s_1$ and $s_2$ with positive jump rate from $s_1$ to $s_2$, the time of first
transition from $s_1$ to $s_2$ has finite expectation. From this remark we
deduce that that $\EE_0[T_{S_1}]<\infty$, and consequently we also have
$\EE_0|I_{S_1}|<\infty$. The submartingale property gives us $\EE_0[I_{S_1}]\geq
0$. Since by the Markov property $I_{S_k}$ is the sum of $k$ independent copies
of variables distributed as $I_{S_1}$, and the same holds for $T_{S_k}$, an
application of the the law of large numbers gives:
$$3v^3=\lim_{k\to\infty}\frac{Y_{S_k}}{T_{S_k}}=\lim_{k\to\infty}\frac{I_{S_k}}{
T_{S_k}}+9\beta_0-3\eps=\frac{\EE_0[I_{S_1}]}{\EE_0[T_{S_1}]}+9\beta_0-3\eps\geq
9\beta_0-3\eps.$$ 
\end{proof}

\begin{proof}[Proof of Theorem \ref{transience}]
We first recall a basic fact. If $(\mathcal N_t,t\geq 0)$ is a Poisson process
with intensity $\lambda$, and $f$ is some nonnegative deterministic function
with $\lim_{\infty}f(t)/t=\mu>\lambda$, then 
\begin{equation}\PP(\forall t\geq 0,\mathcal N_t\leq
f(t))>0.\label{rappelpoisson}\end{equation}
From Proposition \ref{2sites} and Lemma \ref{vitesse} with
$\eps=3\beta_0-\beta_2$, we deduce that a sufficient condition for 
$$\beta_2<\min(v^2,v^3)$$ is that $\beta_1>B$, where 
$$B:=\max\left(\frac{\beta_0\beta_2}{2\beta_0-\beta_2},
\frac{27\beta_0^2\beta_2}{(3\beta_0-\beta_2)(\beta_2-\beta_0)}\right).$$ 
For any $n\geq 5$ it is possible to decompose the $n$ sites in blocks of length
$2$ or $3$ separated by holes of unit length. From now on we suppose for
notational convenience that $n\in 3\ZZ+2$, so we only use blocks of length $2$
and hence only $\beta_2<v^2$ is necessary. Of course this assumption could be
dropped and if it was, we would also need $\beta_2<v^3$. 
\\We start from configuration $x:=(1,1,0,1,1,0,\dots,1,1,0,1,1)$ and show that
the event $E=\{\forall t\geq 0,H^n_t\neq\ze\}$ satisfies $\PP_x(E)>0$. We use
the notation (\ref{defxab}) and remark that
\begin{align*}
E & \supset \Big\{\forall t\geq 0,
X^n_t(3)<\min\big(X^n_t(2),X^n_t(4)\big)\Big\}\cap\dots\cap\Big\{\forall t\geq
0, X^n_t(n-2)<\\ &\qquad \min\big(X^n_t(n-3),X^n_t(n-1)\big)\Big\} \\
 & = \Big\{\forall t\geq 0, N_{3,2}(t)<
\min\big(X_t^{1:2,(0,0)}(2),X_t^{4:5,(0,0)}(1)\big)\Big\}\cap\dots\cap\Big\{
\forall t\geq 0, N_{n-2,2}(t)<\\ &\qquad
\min\big(X_t^{n-4:n-3,(0,0)}(2),X_t^{n-1:n,(0,0)}(1)\big)\Big\}.  
\end{align*}
The process $m_t=\min\big(X_t^{1:2}(2),X_t^{4:5}(1),\dots,X_t^{n-4:n-3}(2),X_t^{n-1:n}(1)\big)$ satisfies $$\lim_{t\to\infty}\frac{m_t}{t}=v^2,$$ and is independent of the Poisson processes $N_{3,2},N_{6,2},\dots,N_{n-2,2}$. Hence the result follows from (\ref{rappelpoisson}) and the fact that $\beta_2<v_2$.
\end{proof}

\textbf{Acknowledgements.}\quad The author wishes to thank Enrique Andjel and
Étienne Pardoux for their continuous support during his research for this paper.
He also wishes to express his sincere gratitude to Alexandre Gaudilli\`ere for
stimulating discussions and his important contribution to Theorem
\ref{recurrence1}.

\end{document}